\documentclass[10pt,reqno]{amsart}
\usepackage[utf8]{inputenc}
\usepackage{amsthm}
\usepackage{tikz}
\usetikzlibrary{arrows.meta}
\usetikzlibrary{shapes.geometric}
\usepackage{forest}
\usepackage{wrapfig}
\usepackage{caption}
\usepackage{float}
\usepackage{units}
\usepackage{todonotes}
\usepackage{comment}
\usepackage{hyperref}
\usepackage[initials,msc-links]{amsrefs}

\newcounter{indice}

\newcommand{\cyclefig}[1]{\begin{tikzpicture}[scale= 0.35]
        \cycle{#1}
      \end{tikzpicture} \hspace{2.7mm}\vspace{0.9mm}}

\newcommand{\cycle}[1]{ 
\setcounter{indice}{0};
\foreach \i in {#1}
\addtocounter{indice}{1};
\addtocounter{indice}{1}
\draw [help lines] (0,0) grid (\theindice-1,\theindice-1);
\setcounter{indice}{1};
\foreach \i in { #1 } { 
\draw (\theindice-.5,\i-.5) [fill] circle (.18);
\draw[dashed] (\theindice-.5, \theindice-.5)--(\theindice-.5, \i-.5);
\draw[dashed] (\i-.5, \i-.5)--(\theindice-.5, \i-.5);
\addtocounter{indice}{1};
}
\addtocounter{indice}{-1};
}

\usetikzlibrary{shapes.geometric}

\newtheorem{theorem}{Theorem}[section]
\newtheorem{example}[theorem]{Example}
\newtheorem{remark}[theorem]{Remark}
\newtheorem{lemma}[theorem]{Lemma}
\newtheorem{prop}[theorem]{Proposition}
\newtheorem{definition}[theorem]{Definition}

\title{Unknotted Cycles}
\author{Christopher Cornwell and Nathan McNew }

\frenchspacing
\begin{document}

\begin{abstract}
    Noting that cycle diagrams of permutations visually resemble grid diagrams used to depict knots and links in topology, we consider the knot (or link) obtained from the cycle diagram of a permutation.  We show that the permutations which correspond in this way to an unknot are enumerated by the Schr\"{o}der numbers, and also enumerate the permutations corresponding to an unlink.  The proof uses Bennequin's inequality.
\end{abstract}

\maketitle

\section{Introduction}\label{sec:intro}

A convenient way to visualize a permutation is to draw a plot of the permutation on an $n\times n$ grid, placing a dot in the box at each of the locations $(i,\sigma(i))$.  The permutation's cycle structure can be represented by making the plot to be a \textit{cycle diagram} \cite{Elizalde}. At each index $i$ we draw a vertical line from $(i,i)$ to the point $(i,\sigma(i))$, followed by a horizontal line to $(\sigma(i),\sigma(i))$.  If $i$ is a fixed point, $i=\sigma(i)$, no additional lines are drawn.  The result is a diagram in which the cycles of the permutation can be traced out along the lines of the diagram in a natural way.  

\begin{wrapfigure}{r}{0.37\textwidth}
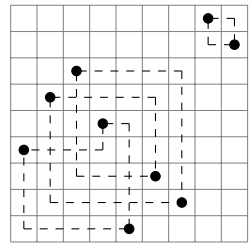

\centering
\hspace{1mm}\cyclefig{4,6,7,5,1,3,2,9,8}
\caption{The cycle diagram \\ of $\pi=467513298$.}
    \label{fig:467513298}
\end{wrapfigure}

For example, from the cycle diagram for the permutation $\pi=467513298$ (written in one line notation) depicted in Figure \ref{fig:467513298}, one can readily identify the cycle decomposition $\pi=(145)(2637)(89)$ (written in cycle notation) by tracing out the lines of the diagram.  

Note that the only corners in a cycle diagram occur at the plotted points of the permutation and along the line $y{=}x$.  The appearance of these diagrams strongly resembles grid diagrams which are a useful tool in the study of knots in topology.

Formally, a grid diagram is an $n \times n$ lattice where
each row, and each column, has exactly two marked boxes (traditionally marked $X$ and $O$) and a line is drawn between the marked boxes in each row and column.  The diagram is interpreted as a knot (an embedding of $S^1$ in $\mathbb{R}^3$) or a link (multiple copies of $S^1$) by designating all of the vertical lines to be overcrossing and the horizontal lines as undercrossing.  (Section \ref{sec:knots_intro} provides an overview of necessary basic notions from knot theory, see also  \cites{Crom, Dyn, NT}.)

Crucially, the only distinctions between diagrams that are valid cycle diagrams and those that are valid grid diagrams are:
\begin{itemize}
    \item In cycle diagrams one of the two designated points in each row/column must lie on the $y=x$ line, which isn't necessarily the case for grid diagrams.
    \item In grid diagrams, it is not allowed to have a single point in a row or column as occur in cycle diagrams when there are fixed points.
\end{itemize}
In light of the second point, so long as we take a permutation without fixed points (called a \textit{derangement}) then by drawing its cycle diagram and interpreting it as the grid diagram of a link we can build a link corresponding to any derangement.  We refer to this link as the \textit{link associated to a permutation} or, when the permutation is a cycle, as the \textit{knot associated to a cycle}.  We do not associate a link to a permutation that is not a derangement.  

Many natural questions arise, including:
\begin{enumerate}
    \item Which knots (links) are associated to some derangement?
    \item How many different derangements are associated with a given link?
\end{enumerate}  
While we do not have a complete answer to question (1), the answer is certainly not ``all knots'' (or links), as explained at the end of Section \ref{subsec:link-diagrams}.

As to question (2), in this paper we enumerate the cycles that are associated to the unknot (called an \textit{unknotted cycle}) as well as permutations associated to an unlink (\textit{unlinked permutations}).  Our main result is the following.

\begin{theorem} \label{thm:main}
The unknotted cycles are enumerated by the shifted sequence of (large) Schr\"{o}der numbers $S_n$.  Enumerating these numbers as $S_1=1$, $S_2=2$, \ldots The number of unknotted cycles of length $n$ is $S_{n-1}$. 
\end{theorem}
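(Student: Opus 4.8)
The plan is to pin down a combinatorial condition on an $n$-cycle $\sigma$ that is equivalent to ``$K_\sigma$ is the unknot,'' and then to count the cycles satisfying it. I would prove the two implications of this characterization by different means: a sequence of elementary grid moves for one direction, and Bennequin's inequality for the other.

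\emph{Setting up the dictionary.} The cycle diagram of $\sigma$ is the grid diagram $G_\sigma$ with an $O$ at each diagonal box $(i,i)$ and an $X$ at each $(i,\sigma(i))$, all vertical strands over. Tracing the cycle orients the strands, and each of the $2n$ marked boxes is a corner of one of the types NW, NE, SW, SE. A direct check shows every $X$-corner is of type NW or SE, while the $O$-corner at index $i$ is of type SW exactly when $\sigma(i)<i$ and $\sigma^{-1}(i)<i$; write $\#SW(\sigma)$ for the number of such $i$ (it is always at least $1$, witnessed by $i=n$). Turning $G_\sigma$ into a Legendrian front in the standard way, the SW-corners become precisely the right cusps, so
\[
\mathrm{tb}(G_\sigma)=w(G_\sigma)-\#SW(\sigma),
\]
where $w(G_\sigma)$ is the writhe, a signed count of the crossings of $G_\sigma$ (each a transverse crossing of a vertical over a horizontal strand), and both terms on the right are explicit functions of $\sigma$. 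I would also record the analogous reading of the self-linking number of a transverse push-off, again as a function of $\sigma$.

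\emph{Knottedness via Bennequin (the converse direction).} I would show that if $\sigma$ is \emph{not} of the prescribed form, then $w(G_\sigma)\ge \#SW(\sigma)$, i.e.\ $\mathrm{tb}(G_\sigma)\ge 0$. Bennequin's inequality, in the slice form $\mathrm{tb}(G_\sigma)\le\overline{\mathrm{tb}}(K_\sigma)\le 2g_4(K_\sigma)-1$, then forces $g_4(K_\sigma)\ge 1$, so $K_\sigma$ is not the unknot. The real content here is the combinatorial inequality $w(G_\sigma)\ge \#SW(\sigma)$ for the ``bad'' cycles; I expect to obtain it by first simplifying $G_\sigma$ through knot-type-preserving commutations and destabilizations down to an \emph{irreducible} cycle diagram that is still bad, and then verifying the inequality directly for irreducible bad cycle diagrams, where a bounded number of forced, like-signed crossings can be exhibited by hand.

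\emph{Unknottedness and the count.} For the other direction I would argue by induction on $n$: a cycle of the prescribed form always contains a destabilizable configuration (an ``empty'' L given by a diagonal $O$ together with its row- and column-partners), and destabilizing yields a shorter cycle still of the prescribed form, terminating at the $2\times 2$ unknot diagram. The same recursive decomposition drives the enumeration: the unknotted $n$-cycles are built up from the length-$2$ diagram by a small fixed menu of stabilization insertions, which I would package either as a generating-function identity producing the (shifted, large) Schr\"{o}der numbers, or as an explicit bijection with a standard Schr\"{o}der family on $n-1$ points (for instance Schr\"{o}der paths, or separable permutations), giving the count $S_{n-1}$. The main obstacle is the combinatorial core shared by both directions: proving that the prescribed form is \emph{exactly} the class closed under, and reachable by, the available destabilizations, and that its failure always pushes $w(G_\sigma)$ up to at least $\#SW(\sigma)$ so that the Bennequin bound bites; a secondary nuisance is that intermediate diagrams in a reduction can leave the class of cycle diagrams, so part of the bookkeeping must be carried out for general grid diagrams.
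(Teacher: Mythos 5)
Your overall architecture coincides with the paper's: every unknotted cycle admits a destabilization (an index with $|\sigma(i)-i|=1$ whose removal is a Reidemeister~I move), the obstruction when no such index exists comes from $\operatorname{tb}= (\text{crossing count}) - (\text{number of upper-right diagonal corners})$ together with the Bennequin--Eliashberg bound, and the count comes from organizing the stabilization insertions. But the two hardest steps are not carried by your plan. First, the combinatorial inequality $w(G_\sigma)\ge \#SW(\sigma)$ for cycles with $|\sigma(i)-i|\ge 2$ for all $i$ cannot be obtained by exhibiting ``a bounded number of forced, like-signed crossings'': $\#SW(\sigma)$ is unbounded over this class (and is not reduced by passing to an irreducible diagram), so you must produce a number of crossings that grows with $\#SW(\sigma)$. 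The paper does this with a global argument on the Seifert circles of the cycle diagram: each Seifert circle passes through exactly one upper-right diagonal corner; the hypothesis $|\sigma(i)-i|\ge 2$ forces every non-maximal Seifert circle to absorb at least two smoothed crossings; and each crossing is shared by exactly two circles, giving $(\#\text{crossings})\ge \#\text{Seifert circles}=\#SW$. Your proposed reduction also exits the class of cycle diagrams, at which point $\#SW$ and $w$ are no longer functions of a permutation and the intended ``direct verification'' has no combinatorial handle; moreover, establishing the inequality for a destabilized diagram does not establish it for $G_\sigma$ itself (it would still let Bennequin conclude knottedness, but that is not what you wrote).

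Second, the enumeration is not a matter of packaging ``a small fixed menu of stabilization insertions'': distinct insertion sequences produce the same cycle, so unknotted $n$-cycles are a proper quotient of the set of insertion sequences, and identifying that quotient is where the Schr\"oder numbers actually enter. The paper resolves this by encoding insertion sequences as rooted-signed-binary trees, defining an equivalence by sign-compatible rotations, and then proving (i) the resulting cycle is independent of the processing order, (ii) equivalent trees give the same cycle, and (iii) inequivalent trees give different cycles; only after this does the generating-function identity $T(x)=x+xT(x)+T(x)^2$ yield $S_{n-1}$. Without an analysis of when two insertion histories collide, your proposal has no route to the exact count.
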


The Schr\"{o}der numbers count a wide array of combinatorial objects, notably seperable permutations of length $n$, and lattice paths from (1,1) to $(n,n)$ consisting of north (0,1), east (1,0) and northeast (1,1) steps which never go above the diagonal \cite{OEISschroder}.  Their generating function is $S(x) = \sum_{n=1}^\infty S_n x^n = \frac{1}{2}\left(1-x-\sqrt{1-6x+x^2}\right)$, which satisfies the recursion $S(x)=x+xS(x)+S(x)^2$.  Asymptotically \cite[Ex. 2.2.1-12]{knuth} \begin{equation}
    S_n \sim \frac{\sqrt{2}-1}{2^{3/4}\sqrt{\pi}}(3+\sqrt{8})^nn^{-3/2}. \label{eq:knuth}
\end{equation}

The relationship between cycle diagrams and grid diagrams does not appear to have been considered before in the literature.  A related body of work is the study of random knots, particularly via the \emph{random grid model} \cite{Zohar}.  A common question in this area considers the probability of a random knot being equivalent to a fixed knot $K$. Particularly, for each given integer $n>0$, random knot models select a knot $K_n$; as $n\to\infty$, does the probability that $K_n$ is equivalent to $K$ approach zero, and what are the asymptotics? A broadly construed conjecture is the Frisch-Wasserman-Delbr\"uck conjecture (see \cite{Zohar} for discussion).

In the \emph{random grid model}, $K_n$ is given by selecting a random pair of $n$-cycles $(\sigma,\tau)$, independently and uniformly. Let $\sigma$ be $(\sigma_1,\ldots,\sigma_n)$ in cycle notation (so $\sigma(\sigma_i)=\sigma_{i+1}$ for each $i$, $\sigma_{n+1}=\sigma_1=1$, say), and likewise for $\tau$.  A grid diagram is then determined by drawing, for each $1\le i\le n$, a vertical line from $(\sigma_i,\tau_i)$ to $(\sigma_i,\tau_{i+1})$, and from there a horizontal line to $(\sigma_{i+1},\tau_{i+1})$. Then $K_n$ is the knot of this grid diagram (a {knot} since $\sigma$ and $\tau$ a $n$-cycles). In this model, Witte has shown that the probability of $K_n$ being a given knot (e.g.\ the unknot), is $\mathcal O(n^{-1/10})$ as $n\to\infty$ (this is implied by Theorem 6.0.1 of \cite{Witte}).

When $\sigma=\tau$, the grid diagram in the above model is the cycle diagram of $\sigma$. So, on the diagonal of the random grid model, Theorem \ref{thm:main} gives an exact probability of $K_n$ being the unknot, equal to $\frac{S_{n-1}}{(n-1)!} \sim \frac{5\sqrt{2}-7}{2^{5/4} \pi n }\left(\frac{e(3+\sqrt{8})}{n}\right)^{n}$ using \eqref{eq:knuth} and Stirling's formula.  Hence, the probability of $K_n$ being the unknot decays super-exponentially as $n\to\infty$.

We show that unknotted cycles are counted by the Schr\"oder numbers by establishing a bijection between them and the rooted-signed-binary trees defined in Section \ref{sec:trees}.  We first define a way to construct an unknotted cycle of size $n+1$ from a rooted-signed-binary tree of size $n$ in Section \ref{sec:bijection}, then show in Section \ref{sec:welldef} that it is well defined and one-to-one on equivalency classes of these trees. Finally, in Section \ref{sec:topology} we use results from topology to prove that it is surjective {--} all unknotted cycles can be obtained in this way.  Finally, we enumerate the unlinked permutations in Section \ref{sec:links} and note a potential relationship to the Diaconis-Graham inequality.


\section{Knots and Links} \label{sec:knots_intro}
Informally, in this article, a \textit{knot} is a closed curve in $\mathbb R^3$ which has no self-intersections.  In addition, two knots $K$ and $K'$ are considered to be \textit{equivalent} if there is a continuous deformation that takes $K$ to $K'$, such that, at each time during the deformation of $K$, it is a knot.  More precisely, a knot $K$ is the image of a piecewise $\mathcal C^1$-embedding, $S^1\hookrightarrow \mathbb R^3$, and $K$ is equivalent to a knot $K'$ if there is an ambient isotopy (see \cite{Lickorish}) carrying one to the other.  When there are multiple knots, no two of which intersect, the multi-curve is called a \textit{link}.  The notion of equivalent links is analogous to equivalent knots.  We use ``links'' inclusively, so a link could be a knot (having just one component curve).

A link is \textit{oriented} by choosing a consistent positive tangent along each component curve; in figures the positive tangent will be indicated by an arrow. 

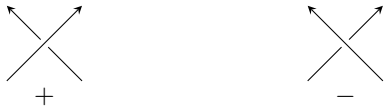
\begin{figure}[ht]
    \begin{tikzpicture}[>=stealth]
        \draw[->]
            (1,0)--(0.55,0.45)
            (0.45,0.55)--(0,1);
        \draw[->]
            (0.5,-0.2) node {$+$}
            (0,0)--(1,1);
        \draw[->]
            (4,0)--(4.45,0.45)
            (4.55,0.55)--(5,1);
        \draw[->]
            (4.5,-0.2) node {$-$}
            (5,0)--(4,1);
    \end{tikzpicture}
    \caption{Positive and negative crossings}
    \label{fig:crossing_sign}
\end{figure}

\subsection{Link diagrams}\label{subsec:link-diagrams} Often, knots and links are studied via ``sufficiently generic'' projections to a plane, in which any self-intersections that arise from the projection have independent tangent directions.  The self-intersections are \textit{crossings}.  Projections of equivalent links can have different numbers of crossings.  The projection plane is understood to have a positive normal direction, allowing us to say that one of the branches at the crossing has a ``higher'' projection preimage; this branch is the \textit{overcrossing strand}, and the other is the \textit{undercrossing strand}.  In figures, the overcrossing strand appears to pass on top of the other.  With this crossing information, the projection is called a \textit{knot (or link) diagram}.

In the diagram of an oriented link, a sign is given to each crossing. If, possibly after a rotation, a neighborhood of the crossing appears as on the left side of Figure \ref{fig:crossing_sign} then the crossing is positive (note the directionality of the arrows). Otherwise, it appears as on the right of the figure and the crossing is negative. The \textit{writhe} of the diagram equals the number of positive crossings minus the number of negative crossings.  The diagram of Figure \ref{fig:knot_projection} has writhe equal to 0.

Given a link diagram of $K$, if the overcrossing and undercrossing strands are interchanged at each crossing, we get the \textit{mirror of $K$}.  This link is equivalent to the image of $K$ under a map that negates one coordinate of $\mathbb R^3$.

The cycle diagrams introduced in Section \ref{sec:intro} are examples of link diagrams; each crossing has a vertical overcrossing strand and a horizontal undercrossing strand.

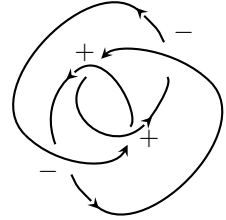
\begin{wrapfigure}{r}{0.38\textwidth}
\centering
    \vspace{-2mm}\begin{tikzpicture}[>=stealth,scale=0.44]
        \draw[thick] (1.9,1.5) ..controls (1.7,1.9) and (1.8, 1.8) ..(1.3,2.3);
        \draw[thick,>-] (1.3,2.3) ..controls (-0.2,3.8) and (-3.8,0.2) ..(-2.3,-1.3);
        \draw[thick,->] (-2.3,-1.3) ..controls (-1.3,-2.3) and (0.4,-2.4) ..(0.8,-1.6);
        \draw[thick] (-0.9,-2.5) ..controls (-0.7,-2.9) and (-0.8, -2.8) ..(-0.3,-3.3);
        \draw[thick,>-] (-0.3,-3.3) ..controls (1.2,-4.8) and (4.8,-1.2) ..(3.3,0.3);
        \draw[thick,->] (3.3,0.3) ..controls (2.3,1.3) and (0.5,1.5) ..(0,1);
        \draw[thick] (-0.45,0.45)    ..controls (-1.45,-0.6) and (0.3,-2) ..(1.3,-1);
        \draw[thick,>-] (1.3,-1)  ..controls (1.45,-0.85) and (2.2,0.20) ..(2,0.45);
        \draw[thick,rotate around={165:(0.2,-0.3)}] (-0.65,0.25)    ..controls (-0.9,-0.15) and (0.7,-2) ..(1.4,-0.9);
        \draw[thick,>-, rotate around={165:(0.2,-0.3)}] (1.4,-0.9) ..controls (1.5,-0.75) and (2.2,0.20) ..(1.4,1.35);
        \draw (-1.6,-2.4) node {$-$}
              (2.5,1.8) node {$-$};
        \draw (1.45,-1.4) node {$+$}
              (-0.5,1.2) node {$+$};
    \end{tikzpicture}
    \vspace{-3mm}
    \caption{An oriented knot diagram with writhe 0}
    \label{fig:knot_projection}
\end{wrapfigure}

As a convention, the orientation of the link associated to a cycle diagram follows the order in which the permutation transitions through the indices.  That is, vertical (resp.\ horizontal) segments above the diagonal are oriented up (resp.\ rightward), and below the diagonal are oriented down (resp.\ leftward). 

Considering the crossings that appear in a cycle diagram and the orientation, each crossing is negative.  As a consequence, if a knot is associated to a cycle diagram, its mirror is in a class of knots called \emph{positive} knots.  Many knots, even among those with a small number of crossings, do not fit into this class (even up to equivalence).  Some obstructions to being positive are known (e.g.\ see \cite{Crom-Homogeneous}). 


\subsection{Boundaries of surfaces}\label{subsec:surfaces} Every (oriented) link can be realized as the boundary of a connected, oriented surface in $\mathbb R^3$.  Such a surface is called a \textit{Seifert surface} of the link, after Herbert Seifert who gave an algorithm for producing one, given a link diagram \cite{Seifert} (the existence of a surface was known earlier \cite{Frankl-Pontr}).  The minimal genus of a Seifert surface of $K$ is called the \textit{genus} of $K$, written $g(K)$.  Equivalent links have equal genus; also, the mirror of $K$ will have the same genus as $K$.

A part of Seifert's algorithm, to be used in Section \ref{sec:topology}, involves determining Seifert circles from an oriented link diagram.  To define Seifert circles, consider a crossing (with an orientation, so each strand has an incoming and outgoing end).  Remove the crossing point and connect the coherently oriented strands ("smooth" the crossing), as in Figure \ref{fig:smoothing}.  By smoothing every crossing of the diagram, we obtain a collection of pairwise disjoint circles in the plane (circles in a topological sense).  These are the \textit{Seifert circles} of the diagram.  See an example in Figure \ref{fig:my_label}.

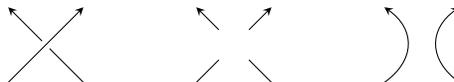
\begin{figure}[ht]
    \begin{tikzpicture}[>=stealth]
        \draw[->]
            (1,0)--(0.55,0.45)
            (0.45,0.55)--(0,1);
        \draw[->]
            (0,0)--(1,1);
        \draw[->]
            (3.5,0)--(3.2,0.3)
            (2.8,0.7)--(2.5,1);
        \draw[->]
            (2.5,0)--(2.8,0.3)
            (3.2,0.7)--(3.5,1);
        \draw (5,0) edge[out=35,in=-35,looseness=1.3,->] (5,1);
        \draw (6,0) edge[out=145,in=215,looseness=1.3,->] (6,1);
    \end{tikzpicture}
    \caption{Smoothing a crossing: remove the crossing, then reconnect the remaining edges in the same orientation without crossing.}
    \label{fig:smoothing}
\end{figure}

\subsection{Legendrian links} Related to the modern study of contact geometry are \textit{Legendrian links}.  Loosely speaking, given a contact structure on $\mathbb R^3$, a knot or link is \textit{Legendrian} if it satisfies a tangency condition based on the contact structure (additionally, equivalence of two Legendrians is constrained by the tangency condition). We refer the interested reader to the survey article \cite{Etn}.

In the ``standard'' contact structure, Legendrian links are often studied via a specific projection, the \textit{front projection}.  This projection has some peculiarities: it has no vertical tangencies, but has \textit{cusps} (locally like the cuspidal cubic); also, at a crossing, the more negatively-sloped branch is always the overcrossing strand.  A link diagram having these two properties (and being smooth except at cusps), is the front projection of a (unique) Legendrian link.

A grid diagram determines a knot or link (simply viewing it as a link diagram, with vertical overcrossings).  We will also determine a Legendrian link from a grid diagram as follows.\footnote{Our method of determining a Legendrian from a grid diagram is not typical in the literature \cite{NT}; however, our method is closely related, and convenient for our purposes.} First, rotate the grid diagram 45$^\circ$ clockwise.  Then, turn what were originally lower-left and upper-right corners into cusps, and smooth out the upper-left and lower-right corners (now local extrema vertically).  Finally, at each crossing interchange overcrossings with undercrossings (see Figure \ref{fig:LegFront}; the grid diagram has vertical overcrossing strands, and the front projection has negatively-sloped overcrossing strands).  

Suppose that $D$ is a grid diagram and that, thinking of $D$ as a link diagram, $K(D)$ is the associated link.  Let $\Lambda(D)$ be the Legendrian link of the front projection that we associated to $D$, as above.  Then, as a regular link, $\Lambda(D)$ is equivalent to the mirror of $K(D)$.  Note that if every crossing of $D$ is negative, then every crossing of $\Lambda(D)$ is positive.  The fact that $\Lambda(D)$ is the mirror of $K(D)$, and not generally equivalent to $K(D)$, will not affect the arguments of Section \ref{sec:topology}.

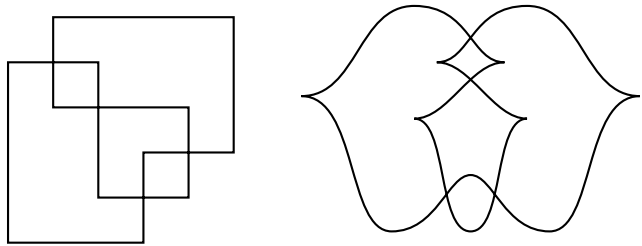
\begin{figure}[ht]
    \[\begin{tikzpicture}[>=stealth,scale=0.15]
        \draw[thick]
            (-26,4)     -- (-26,20) -- (-18,20)
                        -- (-18,8) -- (-10,8)
                        -- (-10,16) -- (-22,16)
                        -- (-22,24) -- (-6,24)
                        -- (-6,12) -- (-14,12)
                        -- (-14,4) -- cycle;
        \draw[thick]
            (10,15) ..controls (13,15) and (12,5) ..(15,5)
                    ..controls (18,5) and (17,15) ..(20,15)
                    ..controls (17,15) and (15,20)..(12,20)
                    ..controls (15,20) and (15,25)..(20,25)
                    ..controls (25,25) and (25,17)..(30,17)
                    ..controls (25,17) and (26,5) ..(22,5)
                    ..controls (18,5) and (17,10) ..(15,10)
                    ..controls (13,10) and (12,5) ..(8,5)
                    ..controls (4,5) and (5,17) ..(0,17)
                    ..controls (5,17) and (5,25)..(10,25)
                    ..controls (15,25) and (15,20) ..(18,20)
                    ..controls (15,20) and (13,15) ..(10,15);
    \end{tikzpicture}\]
\caption{A grid diagram (left) and its front projection (right)}
\label{fig:LegFront}
\end{figure}

An invariant of Legendrian knots and (oriented) links that is of interest to us is the Thurston-Bennequin number. Given a Legendrian $\Lambda$, let $F_{\Lambda}$ be its front projection. The \textit{Thurston-Bennequin} number $\operatorname{tb}(\Lambda)$ is equal to the writhe of $F_{\Lambda}$ minus one-half the number of cusps:
\[\operatorname{tb}(\Lambda) = \text{writhe}(F_{\Lambda}) - \frac12(\#\text{cusps}(F_{\Lambda})).\]
In Section \ref{sec:topology} we will need the Bennequin-Eliashberg inequality \cite{Eliash}, which says that if $\Lambda$ is equivalent, as a regular knot or link, to some $K$, then 
    \[\operatorname{tb}(\Lambda) \leq 2 g(K) - 1.\]
    
\section{Signed Trees} \label{sec:trees}
Various authors \cites{BBL,SS} introduce \textit{separating trees} to study separable permutations.  A separating tree is a rooted binary tree in which each internal node is designated as either positive or negative, and then trees are divided into equivalence classes under certain allowable tree rotation operations. (See also \cite{BR}.)  Using separating trees as motivation, we define a similar structure, which will be useful in the enumeration of unknotted cycles. 

\forestset{
  tria/.style={
    node format={
      \noexpand\node [
      draw,
      shape=regular polygon,
      regular polygon sides=3,
      inner sep=0pt,
      outer sep=0pt,
      \forestoption{node options},
      anchor=\forestoption{anchor}
      ]
      (\forestoption{name}) {\foresteoption{content format}};
    },
    child anchor=parent,
  },
}%
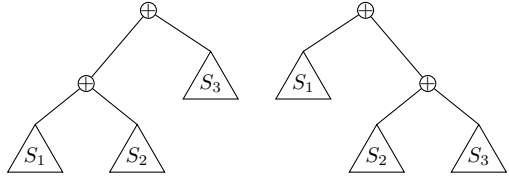
\begin{figure}[h!]
    \centering
    \begin{forest}
for tree={scale=0.8}
[, phantom, s sep = 0.5cm
[$+$,circle,draw,inner sep=-2.2mm,minimum size=1.2mm
  [$+$,circle,draw,inner sep=-2.2mm,minimum size=1.2mm
    [$S_1$, tria]
    [,phantom]
    [$S_2$, tria]
  ]
  [,phantom]
  [,phantom]
  [$S_3$, tria]
]
[$+$,circle,draw,inner sep=-2.2mm,minimum size=1.2mm
  [$S_1$, tria]
  [,phantom]
  [,phantom]
  [$+$,circle,draw,inner sep=-2.2mm,minimum size=1.2mm
    [$S_2$, tria]
    [,phantom]
    [$S_3$, tria]
  ]
]
]
\end{forest}
    \caption{The result of a single tree rotation.  The triangles $S_i$ represent additional segments of the tree whose relative position changes in the rotation, but internally they remain unchanged.}
     \label{fig:TreeRotation}
\end{figure}

\begin{definition} \label{def:RSBT}
A \textbf{rooted-signed-binary tree} is a rooted binary tree in which each node except the root is given a sign, positive or negative.  Furthermore, we say two binary rooted trees are  equivalent if one can be obtained from another by a series of tree rotations.  A tree rotation (see Figure \ref{fig:TreeRotation} for an example) is allowed at a given node if either:
\begin{enumerate}
    \item The parent node has the same sign as the child rotating into its place.
    \item The node is the root node, in which case the newly created node is assigned the same sign as the node that was rotated into the position of the root. 
\end{enumerate}
\end{definition}

For example, the seven trees depicted in Figure \ref{fig:exampletree} are all equivalent, and represent all of the allowed rotations of the given tree.

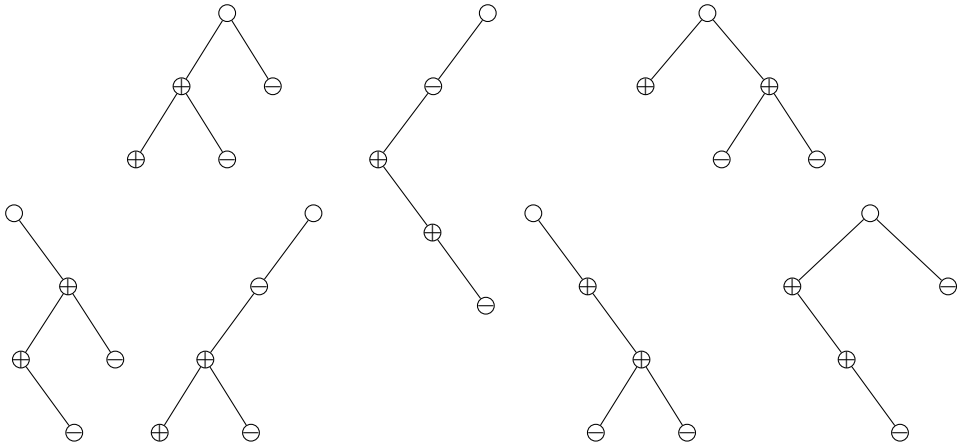
\begin{figure}[h!]
    \centering
    \begin{forest}
for tree={scale=0.95,circle,draw,inner sep=-2.1mm,minimum size=1.1mm}
[, phantom, s sep = 1cm
[\phantom{+} 
  [$+$
    [$+$]
    [,phantom]
    [$-$]
  ]
  [$-$
    [,phantom]
  ]
]
[\phantom{+} 
  [$-$
     [$+$
       [,phantom]
       [,phantom]
       [+
          [,phantom]
          [,phantom]
          [$-$]
       ]
     ]
     [,phantom]
     [,phantom]
  ]
  [,phantom]
  [,phantom]
]
[\phantom{+} 
  [$+$
    [,phantom
       [,phantom]
       [,phantom]
    ]
    [,phantom]
  ]
  [$+$
    [$-$]
    [,phantom]
    [$-$
       [,phantom]
       [,phantom]
       [,phantom]
    ]
  ]
]
]
\end{forest}
\vspace{-15mm}

\begin{forest}
for tree={scale=0.95,circle,draw,inner sep=-2.1mm,minimum size=1.1mm}
[, phantom, s sep = 0.9cm
[\phantom{+} 
  [,phantom]
  [,phantom]
  [$+$
    [$+$
       [,phantom]
       [,phantom]
       [$-$]
    ]
    [,phantom]
    [$-$]
  ]
]
[\phantom{+} 
  [$-$
     [$+$
       [$+$
          [,phantom]
          [,phantom]
       ]
       [,phantom]
       [$-$]
     ]
     [,phantom]
     [,phantom]
  ]
  [,phantom]
  [,phantom]
]
[\phantom{+} 
  [,phantom]
  [,phantom]
  [$+$
    [,phantom]
    [,phantom]
    [$+$
      [$-$]
      [,phantom]
      [$-$]
    ]
  ]
]
[\phantom{+} 
  [$+$
    [,phantom]
    [,phantom]
    [+
       [,phantom]
       [,phantom]
       [$-$]
    ]
  ]
  [,phantom]
  [,phantom]
  [$-$
    [,phantom]
    [,phantom]
  ]
]
]
\end{forest}
    \caption{All seven unique tree rotations of a rooted-signed-binary tree.}
     \label{fig:exampletree}
\end{figure}

As our rooted-signed-binary-trees differ slightly from the separating trees which are known to be in bijection with separable permutations, we give a proof that the number of such trees with $n$ nodes is counted by the Schr\"oder Numbers.

\begin{prop}
The number of rooted-signed-binary-trees with $n$ nodes is counted by the $n$-th Schr\"oder number, $S_n$.
\end{prop}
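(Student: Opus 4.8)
The plan is to set up a generating-function argument that mirrors the one sketched for the Schröder numbers in the introduction, showing that the counting series for equivalence classes of rooted-signed-binary trees satisfies the same functional equation $S(x) = x + xS(x) + S(x)^2$. The first step is to choose a canonical form for each equivalence class: since the allowed rotations let a ``chain'' of internal nodes all sharing one sign be re-associated freely, I would declare the canonical representative to be the one that is, say, maximally left-combed within each maximal same-sign block — equivalently, I would think of a class not as a binary tree but as an ordered tree in which a node may have any number $\ge 2$ of children, obtained by contracting each maximal chain of same-signed internal nodes into a single ``super-node'' carrying that sign. The rotation moves of Definition \ref{def:RSBT} are exactly the moves that do not change this contracted ordered tree, so equivalence classes of rooted-signed-binary trees with $n$ nodes biject with these signed, contracted ordered trees with $n$ total nodes (leaves plus super-nodes, counted appropriately).

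The second step is to translate ``$n$ nodes'' correctly through the contraction. A rooted binary tree with $n$ nodes has, say, $\ell$ leaves and $n-\ell$ internal nodes with $\ell = (n-\ell)+1$, so $n = 2\ell - 1$ is odd; but that contradicts $S_2 = 2$ counting trees with $2$ nodes, so in fact the relevant count must be by leaves or by internal nodes rather than by total nodes — I would first pin down from the small cases in Figure \ref{fig:exampletree} which statistic ``$n$ nodes'' refers to (the example has $4$ leaves and $3$ internal nodes, so it is one of the $S_3 = 6$ — wait, $S_3=6$ but there are $7$ rotations shown, so the statistic is internal nodes: $3$ internal nodes, and indeed there should be $S_3$ classes, one of which is this $7$-element class). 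Having fixed the statistic as the number of internal nodes, I would let $T(x) = \sum_{n\ge 1} t_n x^n$ where $t_n$ is the number of equivalence classes with $n$ internal nodes, and then decompose a canonical tree at its root super-node: the root is a single node of one of two signs, with an ordered list of $k \ge 2$ subtrees hanging off it, each subtree either a single leaf or a canonical tree whose root super-node carries the opposite sign to the root. This ``alternating signs up the chain'' condition is automatic from the contraction.

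The third step is to extract the functional equation from this decomposition and check it matches $S(x)/x$ (or $S(x)$, depending on the indexing offset), using the stated identity $S(x) = x + xS(x) + S(x)^2$. Writing $A(x)$ for the series counting canonical trees by internal nodes and accounting for the sign of the root (a factor of $2$, except that at the root the two moves of Definition \ref{def:RSBT}(2) may identify the two signs — this needs care), and letting a generic subtree contribute $x + A(x)$-type terms, the root-decomposition gives an equation of the shape $A = 2x \cdot \frac{(\text{subtree})^2}{1 - (\text{subtree})}$ or similar; I would then algebraically verify this is equivalent to the Schröder functional equation with the appropriate shift, and confirm the base case $t_1 = S_1 = 1$ (a single internal node with $\ge 2$ leaf children — but only one class, since all such trees are rotation-equivalent, and the root sign is absorbed).

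**The main obstacle** I anticipate is getting the bookkeeping of the root sign exactly right. Rule (2) of Definition \ref{def:RSBT} treats the root specially — a rotation at the root can flip which sign the root carries — so when I decompose at the root I cannot simply multiply by $2$ for the two choices of root sign; rather, the root's sign is only meaningful up to an identification that may or may not be available depending on the shape hanging off it, and I will need to argue that the contracted-ordered-tree picture correctly quotients this out (intuitively: the root super-node's sign is determined by, and only by, the requirement that it differ from the signs of its children's super-nodes, so it genuinely carries one bit of information exactly when at least one child is itself a super-node rather than a leaf — and this case analysis is where a naive count goes wrong). Once that identification is handled cleanly the generating-function manipulation should be routine, and I would double-check the final formula against $S_1=1, S_2=2, S_3=6, S_4=22$ before declaring victory.
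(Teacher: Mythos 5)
Your high-level strategy (choose a canonical representative in each rotation class and derive the Schr\"oder functional equation $T(x)=x+xT(x)+T(x)^2$) is the same as the paper's, but your execution rests on a misreading of Definition \ref{def:RSBT} that derails the count before any generating function appears. The trees here are \emph{not} full binary trees with signs only on internal nodes: every node except the root carries a sign (leaves included), a node may have zero, one, or two children (with left and right distinguished), and ``$n$ nodes'' counts all nodes, root included. Your parity argument ($\ell=(n-\ell)+1$, hence $n=2\ell-1$ odd) applies only to full binary trees, so the ``contradiction'' you extract from $S_2=2$ is spurious; the two classes of $2$-node trees are simply \{root with a positive child\} and \{root with a negative child\}, each realized as either a left or a right child and identified by the root rotation of rule (2). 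Consequently your re-identification of the statistic as ``number of internal nodes'' is wrong, and your reading of Figure \ref{fig:exampletree} is off: that figure shows the seven \emph{members of a single equivalence class} of a $5$-node tree (one unsigned root, two signed internal nodes, three signed leaves), not seven of the $S_3$ classes of trees with three internal nodes.

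Because of this, the decomposition you propose (a root super-node with an ordered list of $k\ge 2$ subtrees, signs alternating along contracted chains) does not describe the objects actually being counted --- already the $2$-node classes have a root with a single child --- and the equation you would extract, of the shape $A=2x\,(\text{subtree})^2/\bigl(1-(\text{subtree})\bigr)$, is not the Schr\"oder equation for the correct statistic. The paper's proof sidesteps all of this by taking the fully left-rotated tree as the canonical representative (so the root has no right child) and splitting on the left child of the root: no left child (contributing $x$), a left child that itself has no right child (contributing $2xT(x)$, the factor $2$ coming from the sign acquired by the old root when it is pushed down), or a left child whose right child has the opposite sign (contributing $T(x)^2-xT(x)$); these sum to $x+xT(x)+T(x)^2$. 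Your instincts that the rotations are associativity moves within maximal same-sign chains and that the root's sign ambiguity under rule (2) needs care are sound and could be developed into a correct contraction argument, but as written the proposal counts the wrong objects by the wrong statistic, so the verification against $S_1,\dots,S_4$ that you defer to the end would fail.
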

\begin{proof}
We show this using generating functions.  Let $T(x)$ be the ordinary generating function for such trees, and take as our representative for each equivalence class of rooted-signed-binary-trees the tree in which all possible left rotations have been made.  Note that such a tree has a root node with no right child.  

There are three possibilities for the left child: no left child, a left child which itself has no right child, or a left child whose right child has the opposite sign.  These three cases are counted by $x$, $2xT(x)$, and $2xT(x)\left(\frac{T(x)}{2x}-\frac{1}{2}\right)=T(x)^2-xT(x)$ respectively.  The last two require brief explanations.  Trees where the left child of the root has no right child can be constructed by taking any tree (counted by $T(x)$), assigning the root node either of 2 signs and then making that node the left child of a new signless root node, (increasing the size by 1) giving $2xT(x)$.  

Now, if the left child of the root has a right child of the opposite sign, the possibilities for that right child can be counted by $\frac{T(x)/x}{2}-1$. The $T(x)/x$ counts the trees with the unlabelled root node removed, dividing by 2 accounts for the fact that the sign of this right child must be the opposite of the node above it, and subtracting one eliminates the possibility of this being empty.  Adding these three cases together and simplifying we obtain 
\[T(x)=x+xT(x)+T(x)^2,\]
the same recurrence as the generating function of the Schr\"oder numbers.
\end{proof}

In the next section we establish a bijection between these trees and the unknotted cycles.  In doing so we will frequently build these trees by inserting (or removing) leaf nodes into existing trees.  If $T$ is a rooted-signed-binary tree and $v$ is a (non-root) leaf of that tree, then we denote by $T-\{v\}$ the tree obtained by removing the node $v$.  Sometimes we may also remove multiple nodes (and write $T-\{v,\ldots\}$), however we will never remove an internal node without also removing its descendants.  

We enumerate the positions that a new leaf-node could be inserted from left to right and refer to them as follows.

\begin{definition}
Suppose a tree $T$ has $n$ nodes, and $v$ is a leaf of $T$.  We say that $v$ is in \textbf{relative position} $i$, where $1\leq i \leq n$, if there are exactly $i-1$ places where a leaf could be inserted in $T$ to the left of $v$ (not counting the left child of $v$), we also say that a node is inserted in position $i$, meaning that after insertion the node is in relative position $i$. 
\end{definition}

Note that the relative position of a leaf is unaffected by any tree rotation of the tree so long as it remains a leaf after the rotation.


\section{The bijection} \label{sec:bijection}

In this section we describe how to construct an unknotted cycle from a fixed rooted-signed-binary-tree.  We define our construction by describing how each node added to the tree affects the associated cycle.  We start with the rooted-signed-binary-tree consisting only of the unlabelled root, which corresponds to the trivial cycle, $21$ which is clearly unknotted.

\begin{wrapfigure}{l}{0.3\textwidth}
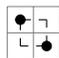

\centering
    \centering
    \cyclefig{2,1}
    \caption{The cycle 21.}
    \label{fig:21}
\end{wrapfigure}

At any point in this construction our tree will have as many places where a new node can be added as the length of the permutation thus far constructed (which is one more than the number of nodes in the tree).  

When the tree consists of only the root node, there are two places where a node can be added, as either left or right children of the root node, corresponding to positions 1 and 2 of the trivial cycle respectively.  

Now, when a new node is added a new point is inserted into the diagram, and other points are shifted up and to the right. We define functions     \begin{equation}
    \xi_m(k) = \begin{cases}
        k \quad & k < m \\
        k+1 \quad & k \geq m.
    \end{cases} \label{eq:x-shift}
\end{equation}
    When a node is added to the tree in relative position $i$, it affects the corresponding cycle $\sigma=s_1s_2\ldots s_n$    (in one line notation) according to the following rules:
\begin{enumerate}
    \item If a positive node is added, $i+1$ is inserted into $\sigma$ prior to the element in position $i$.  Each element having value $i+1$ or greater is increased by one.
    \[s_1s_2\ldots s_n \ \rightarrow \ \xi_{i+1}(s_1)\xi_{i+1}(s_2)\ldots \xi_{i+1}(s_{i-1})\ [i{+}1]\ \xi_{i+1}(s_{i})\ldots \xi_{i+1}(s_{n})\]
    \item If a negative node is added, $i$ is inserted into $\sigma$ after the element in position $i$.  Each element having value $i$ or greater is increased by one.
    \[s_1s_2\ldots s_n \ \rightarrow \ \xi_{i}(s_1)\xi_{i}(s_2)\ldots \xi_{i}(s_{i})\ [i]\ \xi_{i}(s_{i+1})\ldots \xi_{i}(s_{n})\]
\end{enumerate}

In terms of the cycle diagrams, this has the effect of taking one of the corners where the diagram made a right angle at the $y=x$ line and changing it into a notch or a kink with a new off-diagonal point in the cycle diagram.  The exact change depends on the behavior of the lines in the diagram prior to the insertion, and are summarized in Table \ref{tab:insertions}.
\begin{table}
    \centering
    \begin{tabular}{|c|c|c|}
    \hline
    Before insertion & After inserting $\oplus$ & After inserting $\ominus$ \\
    \hline
    
      \begin{tikzpicture}[scale=0.7]
      \draw[opacity=0] (0,1.1);
      \draw [-Latex] (-1,0)--(0,0)->(0,1);
      \draw[dotted] (-0.5,-0.5)--(0.5,0.5);
\end{tikzpicture}   &        \begin{tikzpicture}[scale=0.7]
      \draw[opacity=0] (0,1.1);
      \draw[fill] (-0.5,0) circle (0.07);
      \draw [-Latex] (-1,-0.5)--(-0.5,-0.5)--(-0.5,0)--(0,0)->(0,0.7);
      \draw[dotted] (-1,-1)--(0.6,0.6);
\end{tikzpicture}  &      \begin{tikzpicture}[scale=0.7]
      \draw[opacity=0] (0,1.1);
      \draw[fill] (0,-0.5) circle (0.07);
      \draw [-Latex] (-1.3,0)--(0,0)--(0,-0.5)--(-0.5,-0.5)->(-0.5,0.8);
      \draw[dotted] (-0.8,-0.8)--(0.5,0.5);
\end{tikzpicture}\\

      \begin{tikzpicture}[scale=0.7]
      \draw[opacity=0] (0,0.6);
      \draw [-Latex] (-0.8,0.3)--(0.3,0.3)->(0.3,-0.7);
      \draw[dotted] (-0.5,-0.5)--(0.5,0.5);
\end{tikzpicture}   &        \begin{tikzpicture}[scale=0.7]
      \draw[opacity=0] (0,0.6);
      \draw[fill] (-0.5,0) circle (0.07);
      \draw [-Latex] (-1,-0.5)--(-0.5,-0.5)--(-0.5,0)--(0,0)->(0,-1.2);
      \draw[dotted] (-1,-1)--(0.3,0.3);
\end{tikzpicture}  &      \begin{tikzpicture}[scale=0.7]
      \draw[opacity=0] (0,0.6);
      \draw[fill] (0,-0.5) circle (0.07);
      \draw [-Latex] (-1.3,0)--(0,0)--(0,-0.5)--(-0.5,-0.5)->(-0.5,-1.3);
      \draw[dotted] (-1.2,-1.2)--(0.3,0.3);
\end{tikzpicture}\\

      \begin{tikzpicture}[scale=0.7]
      \draw[opacity=0] (0,1.5);
      \draw [-Latex] (0.7,-0.3)--(-0.3,-0.3)->(-0.3,0.7);
      \draw[dotted] (-0.5,-0.5)--(0.5,0.5);
\end{tikzpicture}   &        \begin{tikzpicture}[scale=0.7]
      \draw[opacity=0] (0,1.5);
      \draw[fill] (0,0.5) circle (0.07);
      \draw [-Latex] (1.3,0)--(0,0)--(0,0.5)--(0.5,0.5)->(0.5,1.3);
      \draw[dotted] (1.2,1.2)--(-0.3,-0.3);
\end{tikzpicture}  &      \begin{tikzpicture}[scale=0.7]
      \draw[opacity=0] (0,1.5);
      \draw[fill] (0.5,0) circle (0.07);
      \draw [-Latex] (1.3,0.5)--(0.5,0.5)--(0.5,0)--(0,0)->(0,1.3);
      \draw[dotted] (1.2,1.2)--(-0.3,-0.3);
\end{tikzpicture}\\

      \begin{tikzpicture}[scale=0.7]
      \draw[opacity=0] (0,-1.3);
      \draw [-Latex] (1,0)--(0,0)->(0,-1);
      \draw[dotted] (-0.5,-0.5)--(0.5,0.5);
\end{tikzpicture}   &        \begin{tikzpicture}[scale=0.7]
      \draw[opacity=0] (0,-1.2);
      \draw[fill] (0,0.5) circle (0.07);
      \draw [-Latex] (1.3,0)--(0,0)--(0,0.5)--(0.5,0.5)->(0.5,-0.9);
      \draw[dotted] (0.8,0.8)--(-0.5,-0.5);
\end{tikzpicture}  &      \begin{tikzpicture}[scale=0.7]
      \draw[opacity=0] (0,-1);
      \draw[opacity=0] (0,1.4);
      \draw[fill] (0.5,0) circle (0.07);
      \draw [-Latex] (1,0.5)--(0.5,0.5)--(0.5,0)--(0,0)->(0,-0.7);
      \draw[dotted] (1,1)--(-0.6,-0.6);
\end{tikzpicture}\\
\hline
\end{tabular}

    \caption{A graphical depiction of how the cycle diagram changes when a node is added to the tree.  The first column depicts the corner on the diagonal before the new point is inserted.  Note, when a $\oplus$ node is inserted, a corner is created above the diagonal, and conversely, if a $\ominus$ is inserted, the new corner is below the diagonal. }    \label{tab:insertions} \vspace{-1.5em} \end{table}
\raggedbottom

From these pictures it is clear visually that these changes will not affect the knot of the cycle diagram, up to equivalence.  (Formally, each of the changes is either planar isotopy or a Reidemeister I move.) Thus if a given cycle is unknotted before one of these operations is performed, it will still correspond to the unknot afterward.  The proof of the following proposition follows immediately.

\begin{prop}
Inserting an off-diagonal element to a permutation $\sigma$ (an $i+1$ in position $i$ or an $i$ in position $i+1$) and shifting the points above or to the right in the cycle diagram results in a permutation associated to the same link as $\sigma$.
\end{prop}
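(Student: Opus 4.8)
The plan is to show that the cycle diagram of the modified permutation presents a link equivalent, as an oriented link, to the one presented by the cycle diagram of $\sigma$; by Reidemeister's theorem it is enough to realize the new diagram from the old one by a planar isotopy together with at most one Reidemeister I move. First I would record that the coordinate shifts $\xi_{i+1}$ (for a positive node) or $\xi_i$ (for a negative node) simply splice one new column and one new row into the grid and renumber the lattice so that old cells retain their marks. The new column (position $i$, resp.\ $i{+}1$) and the new row (value $i{+}1$, resp.\ $i$) contain exactly one marked point between them, namely the inserted point $(i,i{+}1)$, resp.\ $(i{+}1,i)$, because every old value that was at least $i{+}1$ (resp.\ $i$) was increased and every old position that was at least $i$ was shifted right. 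Two consequences: the result is again a derangement (the new mark and every relabeled mark avoid the diagonal), and, away from a small neighborhood $N$ of the single diagonal corner being modified, the new diagram is identical to the old one up to the planar isotopy that stretches the picture to accommodate the extra row and column. So everything reduces to the change inside $N$.

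Inside $N$ the old curve has exactly one corner, lying on the diagonal; it is one of the four types in the first column of Table~\ref{tab:insertions}, according to whether the horizontal strand enters from the left or the right and the vertical strand leaves upward or downward. The essential observation is that these two strands are consecutive arcs of a single cycle: after the step at index $\sigma^{-1}(c)$ the curve arrives at the diagonal point $(c,c)$ and immediately begins the step at index $c$, so any crossing the new arc creates is a self-crossing of that cycle. Inserting a positive (resp.\ negative) node replaces the corner by the L-shaped arc through the new lattice point shown in the corresponding cell of Table~\ref{tab:insertions}, and one checks in each of the eight cells that: (a) the new arc stays on the correct side of the diagonal and is traversed in the correct direction, so the cycle-diagram orientation convention is preserved; and (b) the modification is either a planar isotopy, when the new arc can be pushed back to the old corner inside $N$ without meeting a strand (the ``notch'' cells), or a single Reidemeister I move, when the new arc crosses the adjacent strand exactly once and bounds a disk (the ``kink'' cells). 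The over/undercrossing at such a kink is vertical-over-horizontal by our convention, but this is immaterial since Reidemeister I applies with either choice.

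Planar isotopies and Reidemeister moves preserve the oriented-link class, so the modified permutation is associated to the same link as $\sigma$. The only real work is the finite, wholly local bookkeeping: confirming that the spliced-in row and column are empty apart from the new point, so that the surgery is supported near a single corner; and then running through Table~\ref{tab:insertions} to see that, in each of the eight configurations, no crossing beyond the single Reidemeister I kink (when one occurs) is introduced and the orientations line up. That case check is the only place the argument could go wrong.
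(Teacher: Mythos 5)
Your argument is correct and is essentially the paper's own proof: the paper likewise observes that, per Table \ref{tab:insertions}, each local modification at the diagonal corner is either a planar isotopy or a single Reidemeister I move, hence preserves the link type. Your additional bookkeeping (that the spliced-in row and column contain only the new point, so the change is supported near one corner) is a welcome elaboration of what the paper leaves implicit, but it is the same route.
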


By repeated application of this proposition we get the following theorem.

\begin{theorem}
If a cycle $\sigma$ is obtained from a rooted-signed-binary-tree by the construction above (processing nodes in some order) then $\sigma$ is an unknotted cycle. 
\end{theorem}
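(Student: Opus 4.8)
The plan is to reduce the statement to the preceding proposition by an easy induction on the number of nodes of the tree. The base case is the tree consisting of a single unlabelled root, which by definition corresponds to the cycle $21$; the cycle diagram of $21$ is visibly a single (negative) crossing whose knot is the unknot, so the base case holds. For the inductive step, suppose the construction has processed all but the last node of the tree and has produced (so far) an unknotted cycle $\sigma$; we must argue that processing the final node $v$ yields an unknotted cycle as well.

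The key point is that processing a node is exactly the operation described in the proposition immediately above the statement: a positive node inserts an $i{+}1$ in position $i$, a negative node inserts an $i$ in position $i{+}1$, where $i$ is the relative position of $v$, and in each case the other entries are shifted by $\xi$. That proposition tells us the resulting permutation is associated to the \emph{same link} as $\sigma$. Since $\sigma$ is the unknot by the inductive hypothesis, the new cycle is also associated to the unknot. There is one bookkeeping matter: one must confirm that inserting a single off-diagonal element into a cycle yields another single cycle (so that the object really is a ``cycle'' and the associated link is a single-component knot, not a multi-component link). This is immediate from the insertion rules --- inserting $i{+}1$ just before the entry in position $i$, or $i$ just after the entry in position $i{+}1$, splices the new index into the existing cyclic order without breaking it --- but it is worth stating explicitly. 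I would also remark that ``processing nodes in some order'' is harmless here because the theorem only claims unknottedness of the \emph{final} cycle; the proposition is applied once per node regardless of the order, and each application preserves both the single-cycle property and the unknottedness.

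Concretely, the proof I would write is: induct on $n$, the number of nodes. If $n=0$ the associated cycle is $21$, which is the unknot (its cycle diagram is a single $\mathrm{R}1$ kink). For the inductive step, let $T$ have $n\ge 1$ nodes, let $v$ be the last node processed, and let $\sigma'$ be the cycle associated to the partial tree $T-\{v,\ldots\}$ obtained just before $v$ is processed (which has fewer nodes, and whose processing order is the restriction of the given one). By the inductive hypothesis $\sigma'$ is an unknotted cycle. Processing $v$ applies rule (1) or (2) above, which is precisely the operation in the preceding proposition with $i$ the relative position of $v$; hence the resulting permutation $\sigma$ is a cycle (the insertion splices a new element into the cyclic structure) and is associated to the same link as $\sigma'$, namely the unknot. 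Therefore $\sigma$ is an unknotted cycle, completing the induction.

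I do not expect a serious obstacle here: the real content --- that each insertion move is a planar isotopy or a Reidemeister I move and so does not change the knot type --- has already been absorbed into the proposition and the discussion around Table~\ref{tab:insertions}, so the theorem is genuinely a corollary. The only thing requiring a moment's care is the remark that the output remains a \emph{single} cycle throughout, which I would verify directly from the one-line-notation formulas for rules (1) and (2); everything else is a routine induction.
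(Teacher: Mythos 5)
Your proof is correct and is essentially the paper's argument: the paper derives this theorem by "repeated application" of the preceding proposition, starting from the unknotted base cycle $21$, which is exactly your induction. Your extra remark that each insertion splices the new index into the existing cyclic order (so the output stays a single cycle) is a reasonable bit of added care that the paper leaves implicit.
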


We illustrate this by building the cycle corresponding to the tree in Figure \ref{fig:exampletree}. We use the first diagram depicted in that figure. The reader is invited to verify the same cycle is obtained for any equivalent tree and irrespective of the order the nodes of the tree are considered, as we subsequently  prove.

\begin{example}\label{example1}
We consider the nodes from the first tree of Figure \ref{fig:exampletree} one at a time.  Starting with the root node, we have the trivial cycle 21, shown in Figure \ref{fig:21}.  

We first process the positive, left child of the root.  This positive node is in relative position 1, so we insert 2 into position 1, obtaining 231.  We could also have found this from the cycle diagram, noting the corner in position 1 was a lower left corner (row 3 of table \ref{tab:insertions}) and replacing the corner in the diagram with the picture in the second column.  The cycle 231 is is depicted first in figure \ref{fig:24531}.

\begin{figure}[h!]
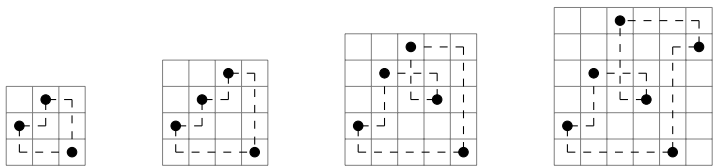

    \centering
    \cyclefig{2,3,1} \hspace{0.5cm} \cyclefig{2,3,4,1} \hspace{0.5cm} \cyclefig{2,4,5,3,1} \hspace{0.5cm} \cyclefig{2,4,6,3,1,5}
    \caption{The cycles 231, 2341, 24531 and 246315.}
    \label{fig:24531}
\end{figure}

Now consider the leftmost leaf.  It again is positive, in relative position 1, so we get 2341. Then continue to the negative node to the right of the node just considered.  The two potential children of the previous, positive, node corresponded to positions 1 and 2 of the cycle, so this node occupies position 3.  As it is negative, we insert a 3 after position 3 of the cycle, obtaining 24531, depicted third in figure \ref{fig:24531}.

Last we consider the right, negative child of the root.  It is in relative position 5, so we insert 5 after the last position of the cycle, obtaining our final cycle, 246315.  

Note in this case all 3 of the elements of the permutation on the off diagonal, the elements 2,3 and 5 (in positions 1,4 and 6 respectively) correspond to leaf nodes. This is not the case for all representations of the tree, after certain rotations only some of these off-diagonal points still correspond to leaves.

\end{example}

From the description it isn't immediately clear this construction produces the same cycle regardless of the order in which nodes are processed, even before potential tree rotations are taken into account.  We will show that the bijection is well-defined on a fixed tree diagram. First, to reduce the number of cases we must consider later, we demonstrate a relationship between the cycle constructed from a rooted-signed-binary tree and the one obtained from the ``negative'' of that tree.

\begin{lemma} 
Let $T$ be a rooted-signed-binary tree and $\overline{T}$ the rooted-signed-binary tree obtained from $T$ by reversing the sign of each node. If $\sigma$ is the cycle produced by processing the nodes of $T$ in some fixed order, then processing the (oppositely signed) nodes of $\overline{T}$ in the same order produces $\sigma^{-1}$ the functional inverse of $\sigma$.
\label{lemma:inverse-cycle}
\end{lemma}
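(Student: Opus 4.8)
The plan is to reduce the statement to a single-insertion claim and then apply induction on the number of nodes of $T$, using the explicit insertion rules. Recall that taking the functional inverse of a permutation corresponds, on cycle diagrams, to reflecting the diagram across the diagonal $y=x$ (equivalently, transposing the permutation matrix). So what I want to show is that for a fixed processing order, the cycle diagram built from $\overline T$ is the reflection across $y=x$ of the cycle diagram built from $T$. Since the two trees are processed in the same order, and the bijection only ever inserts leaves, it suffices to verify that the two insertion rules are interchanged by this reflection: inserting a positive node in relative position $i$ (which inserts $i+1$ before position $i$ and increments all values $\ge i+1$) should become, after transposing, exactly inserting a negative node in relative position $i$ (inserting $i$ after position $i$ and incrementing all values $\ge i$). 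This is a local statement about the effect of $\xi_{i+1}$-shifting versus $\xi_i$-shifting, and it is exactly what Table~\ref{tab:insertions} displays: the $\oplus$ column and the $\ominus$ column in each row are mirror images across the dotted diagonal line.

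Concretely, I would set up the induction as follows. Base case: $T$ is the single unlabelled root, $\sigma = 21 = \sigma^{-1}$, and $\overline T = T$, so the claim holds. Inductive step: suppose $T$ has $n+1$ nodes and let $v$ be the node processed last in the fixed order; write $T' = T - \{v\}$ (this is a valid rooted-signed-binary tree since, processing $v$ last, $v$ must be a leaf at the time it is added — here I should note that the processing order together with the tree structure forces every node to be a leaf when it is inserted, which is how the construction is defined). Let $\sigma'$ be the cycle from $T'$ and $\tau'$ the cycle from $\overline{T'}$; by induction $\tau' = (\sigma')^{-1}$. Now $\sigma$ is obtained from $\sigma'$ by the insertion rule for $v$ (say $v$ is positive, in relative position $i$), and the cycle from $\overline T$ is obtained from $\tau'$ by the insertion rule for the oppositely-signed node (negative, relative position $i$ — note the relative position is the same, since $\overline T$ and $T$ have the same underlying shape and $v$ sits in the same slot). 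So I must check: if $\tau' = (\sigma')^{-1}$, then (negative insertion of position $i$ into $\tau'$) equals (positive insertion of position $i+1$ into $\sigma'$, all inverted), i.e.\ $\bigl[\text{pos. insert at }i\text{ into }\sigma'\bigr]^{-1} = \text{neg. insert at }i\text{ into }(\sigma')^{-1}$.

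The heart of the argument is thus the verification of this last identity purely from the formulas in \eqref{eq:x-shift} and rules (1)--(2). Writing $\sigma'$ in one-line notation as $s_1\cdots s_n$, positive insertion at $i$ produces a permutation $\widehat\sigma$ of length $n+1$ with $\widehat\sigma(i) = i+1$ (the new value sits in the new position $i$) and otherwise $\widehat\sigma$ sends the "old" positions (shifted by $\xi_{i}$ on the domain side) to the old values shifted by $\xi_{i+1}$. Taking the inverse swaps domain and range, turning the $\xi_{i+1}$-shift on values into a $\xi_{i+1}$-shift on positions and the $\xi_i$-shift on positions into a $\xi_i$-shift on values; one then reads off that $(\widehat\sigma)^{-1}$ has its new position at $i+1$ carrying the new value $i$, with every other value $\ge i$ incremented — which is precisely the negative-insertion rule at relative position $i$ applied to $(\sigma')^{-1}$. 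I would do this bookkeeping carefully in a short displayed computation, being mindful of the boundary behavior at the inserted index (the asymmetry "$i$ vs.\ $i+1$" in the two rules is exactly what makes transposition work rather than fail by an off-by-one).

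I expect the main obstacle to be precisely this index bookkeeping: tracking how "position $i$" on one side corresponds to "position $i+1$" on the other under inversion, and confirming that the relative-position index $i$ of the node $v$ is genuinely unchanged when passing from $T$ to $\overline T$ (which follows from the remark that relative position is a property of the tree shape, unaffected by signs). Everything else is a routine unwinding of the definitions, and the pictorial mirror symmetry of Table~\ref{tab:insertions} can be cited as the geometric shadow of the algebraic identity, making the verification a reassurance rather than a surprise.
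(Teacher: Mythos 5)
Your proposal is correct and follows essentially the same route as the paper's proof: induction on the number of nodes, removing the last-processed leaf, using that functional inversion corresponds to reflecting the cycle diagram across $y=x$, and checking that positive and negative insertion at the same relative position are exchanged by this reflection (the paper phrases this final check in terms of how the off-diagonal points shift and where the new point $(i,i+1)$ versus $(i+1,i)$ lands, rather than via the one-line $\xi$-bookkeeping you sketch, but the content is identical). The index identity you isolate does hold as stated, so the remaining "careful displayed computation" is routine.
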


\begin{proof}
The statement is certainly true when $T$ consists of only the root node. As the root is unsigned, $T = \overline{T}$ and the associated cycle $21$ is its own inverse. 

Before continuing, in the cycle diagram of $\sigma$ suppose that for indices $i,j$ there is a consecutive vertical-horizontal pair of segments between diagonal points $(i,i)$ and $(j,j)$.  By definition, if the segments are above the diagonal then $\sigma(i) = j$; if they are below the diagonal then $\sigma(j)=i$.  It follows that the cycle diagram obtained by reflecting the cycle diagram of $\sigma$ across $y=x$ corresponds to the permutation $\sigma^{-1}$.

Now, suppose the statement holds for any rooted-signed-binary tree of size $n$. Let $T$ be a tree of size $n+1$, and $v$ a leaf of $T$.  Let $\sigma$ be the permutation obtained from processing the nodes of $T-\{v\}$ in some order. By induction, $\overline{T-\{v\}}$ corresponds to $\sigma^{-1}$ (processing the oppositely-signed nodes in the same order) and the corresponding cycle diagram is the reflection across the diagonal.

Exchanging the role of $T$ and $\overline{T}$ if necessary, assume $v$ is positive and inserted in position $i$.  Thinking of the cycle diagram of $\sigma$ as determined by its $n$ non-diagonal points $(i,\sigma(i))$, the effect of inserting $v$ into $T-\{v\}$ is to move all points with height greater than $i$ up one position, and shift all points in horizontal position at least $i$ to the right by one. Additionally, a point at $(i,i+1)$ is inserted.  On the other hand, inserting a negative vertex in position $i$ of $\overline{T-\{v\}}$ shifts points in the diagram of $\sigma^{-1}$ that are in horizontal position greater than $i$ to the right, and moves points with height at least $i$ up one position. Additionally, a point at $(i+1,i)$ is added.

We know that points which were unchanged, from the diagram corresponding to $T-\{v\}$ to the one for $T$, reflect across the diagonal to points of the diagram for $\overline{T-\{v\}}$.  The observations above explain why those points which were shifted (or added) upon insertion of $v$ will reflect across the diagonal to those which shift (or are added) when $\overline{v}$ is inserted.  Thus the permutations corresponding to $T$ and $\overline{T}$ (processing nodes in that fixed order) are functional inverses.  \end{proof}

\begin{prop}
The cycle produced by applying the above construction to the nodes of a fixed representation of a rooted-signed-binary tree is the same, regardless of the order in which the nodes of the tree are processed.
\label{prop:process-independent}
\end{prop}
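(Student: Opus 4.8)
The plan is to reduce the statement to a single local commutation fact and then verify that fact from the explicit insertion rules. Call an ordering of the nodes of $T$ \emph{valid} if every node comes after its parent; these are exactly the orderings the construction applies to. The key combinatorial input is the standard fact that the valid orderings of a finite rooted tree (more generally, the linear extensions of any finite poset) are connected under the moves that transpose two \emph{adjacent and incomparable} entries. One sees this directly: if two valid orderings first disagree at position $k$, the node $b$ occupying position $k$ in the second ordering occupies some position $>k$ in the first; the node $c$ immediately preceding it there is not among the first $k-1$ entries (which the two orderings share), hence not an ancestor of $b$, and $c$ is not a descendant of $b$ either (it precedes $b$ in a valid ordering), so $c$ and $b$ may be transposed. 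Iterating brings $b$ to position $k$, and one argues by induction on the number of agreeing initial entries. Thus it suffices to prove the following.

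\emph{Commutation.} If $u$ and $v$ are unprocessed, incomparable nodes of $T$ whose parents have both been processed, then processing $u$ and then $v$ has the same effect on the current cycle as processing $v$ and then $u$. Granting this: two valid orderings related by an adjacent transposition of incomparable entries share the same prefix before the transposed pair and the same suffix after it, so they reach the same cycle just before the pair, the commutation makes them agree on the cycle just after it, and the common remaining steps then act on identical data --- so such orderings yield the same final cycle, and by connectivity all valid orderings do.

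To prove Commutation, let $T'$ be the already-processed subtree, $|T'|=m$, so the current cycle $\tau$ has length $m+1$. Listing the $m+1$ empty child-slots of $T'$ from left to right, the definition of relative position shows that inserting a node at the $k$-th slot is precisely the construction step with parameter $i=k$. Let $u$'s slot be the $a$-th and $v$'s the $b$-th; since $u$ and $v$ occupy distinct slots, $a\ne b$, and we may assume $a<b$. Inserting $u$ replaces the $a$-th slot by the two child-slots of $u$ and pushes every later slot --- in particular $v$'s, as $v$ is not a descendant of $u$ --- one place to the right, so $v$ is then inserted with parameter $b+1$; inserting $v$ first leaves the first $b-1$ slots, including $u$'s, untouched, so $u$ is still inserted with parameter $a$. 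Writing $\mathrm{Ins}^{\pm}_i$ for the two operations determined by \eqref{eq:x-shift} and the displayed insertion rules, Commutation becomes the identity $\mathrm{Ins}^{\epsilon_v}_{b+1}\circ\mathrm{Ins}^{\epsilon_u}_{a}(\tau)=\mathrm{Ins}^{\epsilon_u}_{a}\circ\mathrm{Ins}^{\epsilon_v}_{b}(\tau)$ for $a<b$ and $\epsilon_u,\epsilon_v\in\{+,-\}$. Each choice of signs is a routine computation on one-line notation: an $\mathrm{Ins}$ step splices in one new position and one new value, and because $a<b$ the two splices land in disjoint ranges, the shift $b\mapsto b+1$ being exactly what absorbs the position and value added at $a$. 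Moreover, reflecting cycle diagrams across $y=x$ interchanges $\mathrm{Ins}^{+}_i$ with $\mathrm{Ins}^{-}_i$ and sends $\tau\mapsto\tau^{-1}$, just as in the proof of Lemma \ref{lemma:inverse-cycle}, so the cases $(+,+)$ and $(-,-)$ are equivalent, as are $(+,-)$ and $(-,+)$, halving the work. Pictorially the lemma is evident from Table \ref{tab:insertions}: each insertion is a local modification of the cycle diagram at a single diagonal corner, and modifications at two distinct corners do not interact.

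The only place any care is needed is the bookkeeping in the last paragraph --- confirming that $v$'s slot names the same child-slot before and after $u$ is inserted (so that the two orderings genuinely touch disjoint parts of the diagram) and that this is what forces the correction $b\mapsto b+1$ in one ordering but not the other --- carried through uniformly over the four sign cases and the four corner shapes of Table \ref{tab:insertions}. The underlying geometric content, that surgeries at different corners commute, is transparent; the work is only in matching it against the one-line formulas.
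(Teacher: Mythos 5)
Your proof is correct, and its skeleton differs from the paper's in the reduction step while sharing the same computational core. The paper proves a narrower swap statement (Lemma \ref{lemma:process-twoleaves}: exchanging the \emph{last two leaves} processed does not change the cycle) and then derives Proposition \ref{prop:process-independent} by a minimal-counterexample argument: in a smallest ambiguous tree, any two orderings can be assumed to agree except in the order of the final two nodes, which must be leaves, contradicting the lemma. You instead invoke the connectivity of linear extensions of a poset under adjacent transpositions of incomparable elements, which reduces the proposition to commuting two consecutive insertions anywhere in the processing order, not just at the end; your sketch of that connectivity fact is sound. The payoff of your route is that the global reduction is a standard, reusable fact and the commutation statement is cleanly isolated; the payoff of the paper's route is that it never needs to reason about mid-stream states and stays entirely in the language of trees and their leaves. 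The actual identity you must verify, $\mathrm{Ins}^{\epsilon_v}_{b+1}\circ\mathrm{Ins}^{\epsilon_u}_{a}=\mathrm{Ins}^{\epsilon_u}_{a}\circ\mathrm{Ins}^{\epsilon_v}_{b}$ for $a<b$, is exactly the content of Lemma \ref{lemma:process-twoleaves}, including the position shift $b\mapsto b+1$ and the halving of sign cases via the inverse/reflection trick of Lemma \ref{lemma:inverse-cycle}; the paper carries this computation out explicitly using the relation $\xi_l(\xi_m(k))=\xi_{m+1}(\xi_l(k))$ of \eqref{eq:swap-x}, whereas you declare it routine. That deferral is the only soft spot: to be complete you should write out at least one sign case (the others following by symmetry), since the interaction is not literally ``disjoint'' --- the value-shifts $\xi$ are global --- and the verification genuinely uses \eqref{eq:swap-x} rather than mere disjointness of the spliced positions.
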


In order to prove Proposition \ref{prop:process-independent} we will first prove the following.
\begin{lemma}
The cycle produced by the above construction is not changed when the order of processing the last two leaves of the tree changes.
\label{lemma:process-twoleaves}
\end{lemma}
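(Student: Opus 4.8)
The goal is to show that swapping the order in which the last two leaves $v$ and $w$ are processed yields the same cycle. I would set up notation: let $\sigma$ be the cycle obtained from processing all nodes of $T-\{v,w\}$ in a fixed order, and let $\sigma_{vw}$ (resp. $\sigma_{wv}$) be the result of then inserting $v$ and $w$ in that order (resp. $w$ then $v$). Each insertion is one of the two operations from the construction — "insert $i+1$ in position $i$, shift values $\geq i+1$ up" for a positive node, or "insert $i$ after position $i$, shift values $\geq i$ up" for a negative node — where the relative position $i$ is read off from the tree $T$ itself. The key point, already noted in the paper, is that the relative position of a leaf is unchanged by tree rotations and, more importantly here, by the insertion of \emph{another} leaf elsewhere, provided we track how that other insertion shifts positions. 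So the whole statement reduces to a commutativity check for these two explicit position-and-value shift operations.

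**Reduction via the inverse lemma.** Before grinding through cases, I would use Lemma~\ref{lemma:inverse-cycle} to cut the casework roughly in half. There are four sign-combinations for the ordered pair $(v,w)$: $(+,+)$, $(+,-)$, $(-,+)$, $(-,-)$. Passing from $T$ to $\overline T$ reverses all signs and replaces the resulting cycle by its functional inverse; since "$\sigma_{vw}=\sigma_{wv}$" is equivalent to "$\sigma_{vw}^{-1}=\sigma_{wv}^{-1}$", it suffices to handle $(+,+)$ and one of the mixed cases, say $(+,-)$; the cases $(-,-)$ and $(-,+)$ follow by applying the already-proved statement to $\overline T$. Within each remaining case there is a further split according to the relative order of the insertion positions $i$ (for $v$) and $j$ (for $w$): whether $i<j$, $i=j$, or $i>j$ (with $i=j$ only possible for the mixed-sign case, since two leaves in the same relative position would have to be left/right children of a common parent — for two same-sign leaves one can use a rotation argument, or simply observe the insertion rules still commute).

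**The core computation.** For each of these sub-cases I would track two things through both orders of insertion: (a) the position at which the second leaf gets inserted — this is its relative position in $T$, but expressed relative to the intermediate cycle it must be corrected by $+1$ exactly when the first insertion happened at a position $\leq$ it; and (b) how the value-shift functions $\xi_{\bullet}$ compose. The claim is that $\xi_{j'} \circ \xi_{i'}$ (inserting $v$ first then $w$) equals $\xi_{i''}\circ\xi_{j''}$ (the other order), where the primed/doubly-primed subscripts are the appropriately $+1$-corrected positions, and simultaneously the inserted values land in matching slots. Concretely: if $v$ is positive in position $i$ and $w$ is positive in position $j$ with $i\le j$, then processing $v$ first makes $w$'s insertion happen at position $j+1$ with value $j+2$, while the shifts are $\xi_{j+2}\circ\xi_{i+1}$; processing $w$ first inserts value $j+1$ at position $j$ with shift $\xi_{i+1}\circ\xi_{j+1}$; one checks $\xi_{b+1}\circ\xi_{a} = \xi_{a}\circ\xi_{b}$ for $a\le b$ and that the inserted points match up — this is exactly the standard "insertion operations commute" identity for shifting. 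The mixed case $(+,-)$, especially when the positive $i$ and negative $j$ are equal or adjacent, is the one to watch, since a $+$ node inserts \emph{before} position $i$ (value $i+1$) and a $-$ node inserts \emph{after} position $j$ (value $j$), so the off-diagonal points land on opposite sides of the diagonal and the bookkeeping on which points get shifted is asymmetric.

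**Main obstacle.** I expect the real work — and the only place something could go wrong — is the mixed-sign case with $i=j$ or $|i-j|=1$: there the $+1$ corrections to positions and the $\xi$-subscripts interact with the fact that positive and negative insertions use \emph{different} thresholds ($i+1$ vs. $i$), so one must be careful that the two resulting permutations genuinely agree value-by-value and position-by-position rather than merely "up to an obvious shift." I would handle this by writing out, in one-line notation, the explicit prefix/middle/suffix decomposition of the cycle after each of the two orders and comparing them termwise. Everything else is a routine (if slightly tedious) verification of the commutation identity $\xi_{b+1}\circ\xi_a=\xi_a\circ\xi_b$ for $a\le b$ together with the symmetric statement when $a>b$, which I would state once as a small sublemma and then invoke in each case.
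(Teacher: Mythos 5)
Your proposal matches the paper's proof in all essentials: both halve the sign casework via Lemma \ref{lemma:inverse-cycle} (the paper normalizes the leaf in the smaller position to be negative, the mirror image of your choice), both track the $+1$ correction to the second leaf's relative position after the first insertion, and both reduce the termwise comparison to the commutation identity $\xi_a\circ\xi_b=\xi_{b+1}\circ\xi_a$ for $a\le b$, which is exactly the paper's equation \eqref{eq:swap-x}. The degenerate situations you flag as the main obstacle do not in fact arise: two distinct leaves of $T$ always occupy distinct relative positions of $T-\{v,w\}$ (so $i=j$ is impossible), and the paper's mixed-sign computation is uniform in $i<j$ with no special handling needed when $j=i+1$.
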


\begin{proof}
Suppose $T$ is a rooted-signed-binary tree with at least two leaves of with $n+2$ positions where a new leaf could be inserted. Fix two distinct leaves, $v$ and $w$ of $T$ in relative positions $i$ and $j$ respectively of $T-\{v,w\}$ where $i<j$.

By processing the nodes of $T-\{v,w\}$ in some order, our construction assigns a cycle $\sigma$ to $T-\{v,w\}$. Write $\sigma = s_1s_2...s_n$.  We show $v$ and $w$ can be added to the tree in either order, and the corresponding changes to $\sigma$ produce the same cycle.  

First note we can assume by Lemma \ref{lemma:inverse-cycle} that $v$ is signed negatively (if it weren't we could swap $T$ for $\overline{T}$ and show that the inverse cycle can be constructed unambiguously regardless of the order in which the leaves are added).  Recall the function $\xi_m$ defined in \eqref{eq:x-shift}.  For any two integers $l<m$ it is easily verified that 
\begin{equation}
    \xi_l(\xi_m(k))=\xi_{m+1}(\xi_l(k)) \label{eq:swap-x}
\end{equation} holds for all integers $k$.  We consider two cases, based on the sign of $w$.  

{\bf Case 1:} $w$ is positive.  Inserting the negatively-signed $v$ first, in position $i$ of $T-\{v,w\}$, we transform the associated permutation to \begin{equation}
    s_1s_2...s_n \ \rightarrow \ \xi_{i}(s_1)\xi_{i}(s_2)\ldots \xi_{i}(s_{i})\ [i]\ \xi_{i}(s_{i+1})\ldots \xi_{i}(s_{n}). \label{eq:vfirst}
\end{equation}  
Since $w$ is to the right of $v$, $w$ is now in relative position $j+1$ of $T-\{w\}$.  Also $\xi_i(s_j)$ is the $(j+1)$-element in (\ref{eq:vfirst}), so inserting the positively signed $w$ results in 
\begin{align}&\ldots \xi_{j+2}(\xi_{i}(s_{i}))\xi_{j+2}(i)\xi_{j+2}(\xi_{i}(s_{i+1}))\ldots \xi_{j+2}(\xi_{i}(s_{j-1}))\ [j{+}2]\ \xi_{j+2}(\xi_{i}(s_{j}))\ldots \nonumber \\
&=\ldots \xi_{i}(\xi_{j+1}(s_{i}))\ [i]\ \xi_{i}(\xi_{j+1}(s_{i+1}))\ldots \xi_{i}(\xi_{j+1}(s_{j-1}))\ [j{+}2]\ \xi_{i}(\xi_{j+1}(s_{j}))\ldots \label{eq:positive-w}
\end{align}
where the second line was obtained using \eqref{eq:swap-x} to interchange the two functions. On the other hand, if we first add $w$ to $T-\{v,w\}$, the first permutation obtained is \[s_1s_2...s_n \ \rightarrow \ \xi_{j+1}(s_1)\xi_{j+1}(s_2)\ldots \xi_{j+1}(s_{j-1})\ [j+1]\ \xi_{j+1}(s_{j})\ldots \xi_{j+1}(s_{n}).\] Inserting the negatively signed $v$ into position $i$ of $T-\{v\}$ is then seen to immediately yield \eqref{eq:positive-w} since $\xi_i(j+1)=j+2$. 

{\bf Case 2:} $w$ is negative.  As in case 1, if we first insert the negatively-signed node $v$ we obtain \eqref{eq:vfirst}.  Again, $w$ is to the right of $v$ in $T$, so $w$ is now in relative position $j+1$ of $T-\{w\}$. Inserting the negatively signed $w$ in position $j+1$ results in 
\begin{align}&\ldots \xi_{j+1}(\xi_{i}(s_{i}))\xi_{j+1}(i)\xi_{j+1}(\xi_{i}(s_{i+1}))\ldots \xi_{j+1}(\xi_{i}(s_{j}))\ [j{+}1]\ \xi_{j+1}(\xi_{i}(s_{j+1}))\ldots \nonumber \\
&=\ldots \xi_{i}(\xi_{j}(s_{i}))\ [i]\ \xi_{i}(\xi_{j}(s_{i+1}))\ldots \xi_{i}(\xi_{j}(s_{j}))\ [j{+}1]\ \xi_{i}(\xi_{j}(s_{j+1}))\ldots \label{eq:negative-w}
\end{align}
interchanging the two functions using \eqref{eq:swap-x}.  If, instead, we first add $w$ to $T-\{v,w\}$, the first, intermediate, permutation obtained is \[s_1s_2...s_n \ \rightarrow \ \xi_{j}(s_1)\xi_{j}(s_2)\ldots \xi_{j}(s_{j})\ [j]\ \xi_{j}(s_{j+1})\ldots \xi_{j}(s_{n}).\] Now, inserting the negatively signed $v$ into position $i$ of $T-\{v\}$ again transforms this to \eqref{eq:negative-w} after noting that $\xi_i(j)=j+1$. 
\end{proof}

\begin{proof}[Proof of Proposition \ref{prop:process-independent}]
Suppose for contradiction there exist trees for which the cycle construction described is ambiguous, and let $T$ be a minimal sized counterexample.  Let $v$ and $w$ be the last nodes of $T$ processed in two different orders resulting in different cycles.  Note that $v$ and $w$ must both be leaves of $T$, and that $v \neq w$, otherwise the construction of $T-\{v\}$ would also be ambiguous. 

Because $T$ is a minimal counterexample, any order of processing the nodes of $T-\{v\}$ must result in the same cycle, so we can assume $w$ is the last node inserted into $T-\{v\}$.  Likewise, we may assume in the order where $w$ is last that $v$ is second-to-last. Thus, each ordering first constructs a cycle for $T-\{v,w\}$, and in each case the cycles constructed on $T-\{v,w\}$ must agree. Since we supposed that these orders resulted in different cycles (when processing $v$ then $w$ versus $w$ then $v$), we have contradicted the statement of Lemma \ref{lemma:process-twoleaves}.
%
%
%
%
\end{proof}

\begin{lemma} \label{lem:tree-rotation-leaf}
Suppose $\sigma$ is a cycle of length at least 3 obtained from a rooted-signed-binary tree $T$. If $\sigma(i)-i=1$, there is a tree rotation of $T$ with a positively signed leaf in relative position $i$ or, if $\sigma(i)-i=-1$, a negatively signed leaf in relative position $i-1$. 
\end{lemma}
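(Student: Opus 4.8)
The plan is to induct on $n$, the number of nodes of $T$ (so $\sigma$ has length $n+1\ge 3$), after first reducing the ``$\sigma(i)=i-1$'' case to the ``$\sigma(i)=i+1$'' case. For the reduction I would use Lemma~\ref{lemma:inverse-cycle}: the tree $\overline T$ produces $\sigma^{-1}$, and $\sigma(i)=i-1$ is equivalent to $\sigma^{-1}(j)=j+1$ with $j=i-1$. Since reversing all signs is an involution compatible with both rotation rules, the equivalence class of $\overline T$ is the image under sign reversal of the equivalence class of $T$, and a positively signed leaf at relative position $j$ of a rotation of $\overline T$ corresponds to a negatively signed leaf at relative position $j=i-1$ of the matching rotation of $T$ — exactly the negative conclusion. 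So it suffices to handle the positive case: produce a rotation of $T$ with a positively signed leaf at relative position $i$.

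The base case $n=2$ is a finite check: up to equivalence there are two trees, producing $231$ and $312$, and one sees directly that every index with $\sigma(i)-i=\pm1$ is the relative position of a leaf of the correct sign in some representative. For the inductive step, assume the statement for all smaller trees, let $T$ have $n\ge3$ nodes and produce $\sigma$ with $\sigma(i)=i+1$. Since $n\ge2$, $T$ has a non-root leaf $v$, of some sign $\epsilon$ and relative position $j$. By Proposition~\ref{prop:process-independent} I may process $v$ last, so $T':=T-\{v\}$ has $n-1\ge 2$ nodes, produces a well-defined cycle $\tau$ of length $n\ge3$, and $\sigma$ is obtained from $\tau$ by the single insertion rule associated to $v$. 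If $\epsilon=+$ and $j=i$, then $v$ itself is the desired leaf and I am done. Otherwise I unwind the shift functions $\xi_m$ in the insertion rules; using that $\tau$, being a cycle of length $\ge2$, has no fixed point (which forces $i\notin\{j,j+1\}$ when $\epsilon=-$, the case $i=j$ with $\epsilon=+$ having already been dispatched), this routine bookkeeping should yield the clean dichotomy that $\tau$ has $\tau(i')=i'+1$ where $i'=i$ if $i<j$ and $i'=i-1$ if $i>j$. I then apply the induction hypothesis to $T'$ and $\tau$ to get a rotation $T''$ of $T'$ with a positively signed leaf $u$ at relative position $i'$, and set $\hat T:=T''$ with a sign-$\epsilon$ leaf inserted at relative position $j$.

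The crux, which I expect to be the main obstacle, is to check that $\hat T\sim T$ — that ``inserting a leaf at a fixed relative position'' descends to equivalence classes. Given a sequence of legal rotations $T'=R_0\to\cdots\to R_m=T''$, the plan is to argue that performing ``the same'' rotations after inserting a sign-$\epsilon$ leaf at relative position $j$ stays legal and gives $T=R_0+v\to\cdots\to R_m+v=\hat T$. This should work because inserting a leaf changes no sign of any pre-existing node, and in each step $R_k\to R_{k+1}$ the node at which one rotates and the child rotating into its place are nodes of $T'$, not $v$; hence $v$ remains a leaf throughout and the legality condition (matching signs, or the root rule) is unaffected. One also needs that a rotation preserves the left-to-right order of the insertion slots, so that ``relative position $j$'' is unambiguous along the sequence and, being a leaf, $v$ keeps relative position $j$ (by the remark after the definition of relative position). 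Spelling this out carefully is the real work.

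Granting $\hat T\sim T$, the argument finishes by tracking $u$: in $T''$ it has relative position $i'<j$ when $i<j$, and $i'=i-1\ge j$ when $i>j$, so inserting $v$ at relative position $j$ leaves $u$ at relative position $i$ in the first case (unchanged) and bumps it to $i$ in the second. Thus $\hat T$ is a rotation of $T$ carrying a positively signed leaf at relative position $i$, completing the induction.
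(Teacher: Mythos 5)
Your overall strategy is the paper's: reduce to the $\sigma(i)-i=1$ case via Lemma~\ref{lemma:inverse-cycle}, remove a leaf $v$ at relative position $j$, show the smaller cycle $\tau$ satisfies $\tau(i')=i'+1$ with $i'=i$ or $i-1$ according as $i<j$ or $i>j$, apply induction to get a rotation with a positive leaf $u$ at position $i'$, and re-insert $v$. You are also right that ``inserting a leaf at a fixed relative position descends to equivalence classes'' needs an argument (the paper dispatches it in one sentence), and your sketch of it is sound. But your final step has a genuine gap in the adjacent cases $j=i+1$ and $j=i-1$, which do occur and are not excluded by your fixed-point observation (that only rules out the adjacent cases when $\epsilon=-$; e.g.\ $\epsilon=+$, $\sigma(i)=i+1$, $\sigma(i+1)=i+2$ is perfectly possible). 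Since $u$ sits at relative position $i'$, its two child slots are exactly the insertion positions $i'$ and $i'+1$; when $j=i+1$ (so $i'=i$) or $j=i-1$ (so $i'=i-1=j$), the node $v$ is inserted precisely into one of $u$'s child slots, so $u$ ceases to be a leaf and your claim that ``$\hat T$ carries a positively signed leaf at relative position $i$'' fails as stated. Note also that you cannot dodge this by choosing a different leaf $v$: a path-shaped tree has only one non-root leaf.

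The gap is fixable, and the paper's proof shows how: in these adjacent cases $v$ is forced to be positive (a negative $v$ at $j=i\pm1$ contradicts $\sigma(i)=i+1$ or injectivity of $\sigma$), hence $v$ is a positive child of the positive leaf $u$, so the rotation at $u$ exchanging $u$ and $v$ is legal by Definition~\ref{def:RSBT}(1); after that one extra rotation a positively signed leaf again sits at relative position $i$. You need to add this case split and the accompanying sign analysis to complete the induction.
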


\begin{proof}
Note that if $\sigma(i)-i=-1$ then $\sigma^{-1}(i-1)-(i-1)=1$. By Lemma \ref{lemma:inverse-cycle} we can reduce to the case $\sigma(i)-i=1$ by considering the inverse cycle with the oppositely signed tree. Hence, we consider only the case $\sigma(i)-i=1$. 

The cycles of length 3 are 231 and 312. Only the former has $\sigma(i)-i=1$ (for $i=1,2$).  In this case $T$ has one positive leaf, which can be rotated into either position 1 or 2.  Now, suppose the claim holds for all cycles of length $n-1$. Let $\sigma$ have length $n$, $\sigma(i)-i=1$,  $T$ a tree corresponding to $\sigma$ with a leaf $w$ in relative position $k$. If $k=i$ and $w$ were positive we are done.  If $k=i$ with $w$ negative then $\sigma(i+1)=i$. But $\sigma(i)=i+1$ by assumption, and since $\sigma$ is a cycle we would be left with $\sigma=21$. This is not the case so we can assume $k\ne i$.

Let $\sigma^*$ be the cycle corresponding to $T-\{w\}$. If $w$ is negative and $k=i+1$, then $\sigma(i+2)=i+1=\sigma(i)$ which is impossible. Thus it is straightforward to check that if $k>i$ then $\sigma^*(i)=\sigma(i) = i+1$ and, respectively, if $k<i$ that $\sigma^*(i-1)=\sigma(i)-1 = i$, regardless of the sign of $w$. Since $\sigma^*$ has length $n-1$, there exists a rotation of $T-\{w\}$, call it $(T-\{w\})'$, containing a positive leaf $u$ in relative position $i$ (respectively $i-1$) of $(T-\{w\})'$.  Note the children of $u$ would be in relative positions $i$ and $i+1$ (respectively $i-1$ and $i$).

If either $k<i-1$ or $k>i+1$ then $u$ would still be a leaf after $w$ is inserted into position $k$ of $(T-\{w\})'$ and, in either case, $u$ will be in position $i$.  Since tree rotations do not affect the relative position of any leaf (so long as the rotation doesn't cause the node to no longer be a leaf)
applying the same rotations to $T$ required to transform $T-\{w\}$ into $(T-\{w\})'$ will result in a tree having a leaf in the desired position.

If $k=i-1$ and $w$ were negative, then $\sigma(i)=i-1$ which isn't the case, so if $k=i-1$ or $k=i+1$ then $w$ is positive, and a child of the leaf $u$ in $T'$.  In either case we can perform a tree rotation of $w$ into $u$ producing a positively signed vertex in position $i$ as desired, and the same argument as above applies. \qedhere

\end{proof}

\begin{prop}
The construction of cycles from rooted-signed-binary trees is injective, trees that aren't related by tree rotations don't produce the same cycle.
\end{prop}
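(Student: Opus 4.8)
The plan is to induct on the length $n$ of the cycle, using the leaf-extraction machinery already built. Suppose $T_1$ and $T_2$ are rooted-signed-binary trees producing the same cycle $\sigma$ of length $n$; I want to show $T_1$ and $T_2$ are related by tree rotations. The base cases $n=2,3$ are immediate: length $2$ forces the single-node tree, and length $3$ gives $231$ (one positive leaf) or $312$ (one negative leaf), each determined up to rotation. For the inductive step, the key is that $\sigma$, being a cycle of length $n\ge 3$, must have \emph{some} index $i$ with $|\sigma(i)-i|=1$ — indeed, reading the cycle diagram, a cycle cannot have every non-diagonal point ``far'' from the diagonal, since consecutive diagonal corners $(i,i)$ and $(i+1,i+1)$ are joined in the cycle structure somewhere; more carefully, the permutation $\sigma$ together with $\sigma^{-1}$ realizes $1,\dots,n$ and the minimal ``gap'' among the transitions is $1$ (one can argue: if $\sigma(i)\ge i+2$ and $\sigma(i)\le i-2$ for all $i$ off the diagonal then tracing the cycle never lets adjacent diagonal points be visited consecutively, contradicting that $\sigma$ is a single $n$-cycle visiting all of $1,\dots,n$). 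I will need to state and prove this ``a $\pm1$ transition exists'' fact carefully — I expect this to be the main obstacle.

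Granting such an $i$ with, say, $\sigma(i)-i=1$ (the case $\sigma(i)-i=-1$ is handled symmetrically via Lemma~\ref{lemma:inverse-cycle}, passing to $\sigma^{-1}$ and the oppositely-signed trees), Lemma~\ref{lem:tree-rotation-leaf} supplies rotations $T_1'$ of $T_1$ and $T_2'$ of $T_2$, each having a positively-signed leaf in relative position $i$. By Proposition~\ref{prop:process-independent} the cycle is independent of processing order, so I may take that leaf to be processed last. Removing it corresponds, by the insertion rules of Section~\ref{sec:bijection}, to deleting the element in position $i$ (which equals $i+1$) and shifting down; this inverse operation is well-defined and depends only on $\sigma$, not on the tree. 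Hence $T_1'-\{v_1\}$ and $T_2'-\{v_2\}$ produce the \emph{same} cycle $\sigma^*$ of length $n-1$. By the inductive hypothesis, $T_1'-\{v_1\}$ and $T_2'-\{v_2\}$ are related by tree rotations.

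Finally I must lift that equivalence back up: reinserting a positively-signed leaf in relative position $i$ into two rotation-equivalent trees yields rotation-equivalent trees. This requires checking that each allowed tree rotation of $T_1'-\{v_1\}$, after reattaching the leaf in position $i$, is mirrored by an allowed rotation (or short sequence of rotations) of $T_1'$ — one uses that a rotation away from the node where the leaf sits, or not involving it, is unaffected, while a rotation at the leaf's parent either keeps the leaf a leaf or turns it into a child of the rotated-in node, in which case the sign condition for the rotation is exactly the condition guaranteeing the leaf can be rotated past. This gives $T_1' \sim T_2'$, hence $T_1 \sim T_2$, completing the induction. The two places demanding real care are the existence of a $\pm1$ transition in every cycle of length $\ge 3$, and the bookkeeping that leaf-insertion is compatible with rotation-equivalence; the rest is routine given the lemmas already in hand.
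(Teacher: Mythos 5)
Your overall strategy is the one the paper uses: rotate both trees so that each has a leaf of the same sign in the same relative position $i$, strip that leaf to get two trees producing a common shorter cycle $\sigma^*$, apply induction (the paper phrases it as a minimal counterexample), and lift the rotation-equivalence back up by observing that the rotation sequence relating the stripped trees never involves the removed leaf. Your treatment of the lifting step is, if anything, more careful than the paper's.

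There is, however, one genuine problem: the fact you flag as ``the main obstacle'' --- that every cycle of length $n\ge 3$ has some index $i$ with $|\sigma(i)-i|=1$ --- is \emph{false} for general cycles, and your sketched argument for it does not work. For example, $\sigma=34512$ (one-line notation) is the single $5$-cycle $(1\,3\,5\,2\,4)$ and satisfies $|\sigma(i)-i|\ge 2$ for every $i$; such cycles are exactly the subject of Proposition \ref{lem:C-UR}. The statement is true for \emph{unknotted} cycles, but that is Proposition \ref{prop:unknot-has-off-diagonal}, whose proof goes through the Bennequin--Eliashberg inequality --- not something a short combinatorial tracing argument will deliver. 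Fortunately you do not need any of this: in the setting of the proposition, $\sigma$ is by hypothesis produced from a tree $T_1$ with at least one non-root node, so $T_1$ has a leaf $v$ in some relative position $i$, and the insertion rules immediately give $\sigma(i)-i=1$ (if $v$ is positive) or $\sigma(i+1)-(i+1)=-1$ (if $v$ is negative). This is exactly how the paper obtains the index: take a leaf of one tree, then use Lemma \ref{lem:tree-rotation-leaf} only to find a matching leaf in the \emph{other} tree. With that replacement your argument closes up and coincides with the paper's proof.
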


\begin{proof}
This is clear if $\sigma=21$.  Suppose for contradiction that there exists a minimal unknotted cycle $\sigma$ obtained from distinct rooted-signed-binary trees $T$ and $T'$, unrelated by tree rotations.  Let $v$ be a leaf of $T$, in relative position $i$.  It follows from the construction that $\sigma(i)-i = \pm 1$.  By Lemma \ref{lem:tree-rotation-leaf} there exists a tree rotation of $T'$ so that $T'$ also has a leaf $w$ in position $i$, with the sign as $v$.  

Let $\sigma^*$ be the cycle obtained from $T-\{v\}$.  Since inserting $v$ in position $i$ of $T-\{v\}$ results in $\sigma$, as does inserting $w$ in position $i$ of $T'-\{w\}$, we find, by reversing the insertion rules, that $\sigma^*$ is necessarily the cycle obtained from $T'-\{w\}$ as well. 

By the minimality of $\sigma$,  since $T-\{v\}$ and $T'-\{w\}$ are both associated to $\sigma^*$, there exists a sequence of tree rotations to transform $T-\{v\}$ to $T'-\{w\}$.  None of these rotations involves the leaf $v$  since it is not present, so the same sequence of rotations can be performed on $T$.  Performing this sequence of rotations to $T$ will leave the leaf $v$ unchanged in position $i$, while transforming the remainder of the tree, $T-\{v\}$, into $T-\{w\}$.  Thus this sequence results in the leaf $v$ moving into the same position occupied by $w$ in $T'$ and thus transforms $T$ into $T'$, a contradiction. \end{proof}

So far we have not considered the allowed tree rotations of a rooted-signed-binary tree.  In this section we show that two such trees which can be obtained from one another by allowed tree rotation moves correspond to the same cycle.

\begin{theorem}
Any rooted-signed-binary-trees $T$ and $T'$ related by the rotations of Definition \ref{def:RSBT} produce the same cycle under the construction of Section \ref{sec:bijection}.
\end{theorem}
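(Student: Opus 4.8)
The plan is to reduce to the case of a \emph{single} tree rotation, since a chain of rotations then follows by induction, and to handle a single rotation by exploiting the fact (Proposition~\ref{prop:process-independent}) that the order of processing nodes is irrelevant. So suppose $T'$ is obtained from $T$ by one tree rotation at a node $p$, with child $c$ rotating into $p$'s place (as in Figure~\ref{fig:TreeRotation}), where the rotation is legal: either $p$ and $c$ carry the same sign, or $p$ is the root and the new node inherits $c$'s sign. Because processing order is free, I would process all nodes \emph{not} in the rotated ``gadget'' first—that is, build up the cycle associated to the common part of $T$ and $T'$ lying strictly above $p$ together with the three subtrees $S_1,S_2,S_3$ hanging below—and only afterwards process the two pivot nodes ($p$ and $c$) in whichever order is convenient. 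After this common prefix we have a single cycle $\sigma$, and two leaf-insertion operations remain, differing only in which of the two pivot nodes is ``the new corner'' relative to the other; the whole problem collapses to: inserting the pair $\{p,c\}$ in the $T$-configuration versus the $T'$-configuration yields the same cycle.

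The key computation is then local. Using Lemma~\ref{lemma:inverse-cycle} I can assume $p$ (hence $c$, in the same-sign case) is positive, replacing $T,T'$ by their negatives if necessary; the root case is essentially the same after observing that the unsigned root contributes no sign constraint and that ``inheriting $c$'s sign'' is exactly what makes the two sides match. Now the two pivot nodes sit in consecutive relative positions, and I would track the relevant entries of $\sigma$ through the two insertions using the commutation identity $\xi_l(\xi_m(k)) = \xi_{m+1}(\xi_l(k))$ for $l<m$ from \eqref{eq:swap-x}, exactly in the style of the proof of Lemma~\ref{lemma:process-twoleaves}: on the $T$-side one inserts $i{+}1$ in position $i$ and then $i{+}2$ in position $i{+}1$ (or the analogous negative-node strings), on the $T'$-side one inserts in the other order, and the two resulting one-line words coincide after applying \eqref{eq:swap-x}. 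The positions of the subtrees $S_1,S_2,S_3$ shift by a predictable amount on each side, and the bookkeeping shows the shifts agree.

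The step I expect to be the main obstacle is \textbf{organizing the case analysis so that it stays short}: a general rotation pivot $p$ need not be a leaf, so ``process the pivots last'' must be justified carefully—one wants to say that the subtrees $S_1,S_2,S_3$ and everything above $p$ can be assembled first, leaving only $p$ and $c$ to be inserted, but $p$ and $c$ are internal after $S_1,S_2,S_3$ are attached, so one actually processes $p,c$ \emph{before} the $S_i$'s and argues that the $S_i$-insertions interact identically with the two configurations. Concretely, after inserting $p$ and $c$ in either order one gets two cycles on a small index set, and I must check not merely that these two small cycles are equal but that the positions into which the roots of $S_1,S_2,S_3$ will subsequently be inserted are the same on both sides; granting that, Proposition~\ref{prop:process-independent} lets me attach the $S_i$'s in any order and finishes the proof. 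The sign hypotheses in Definition~\ref{def:RSBT} are used precisely here: they guarantee that in both $T$ and $T'$ the relevant pivot node presents, to the subtree being grafted, a corner of the correct type (above- vs.\ below-diagonal), so that Table~\ref{tab:insertions} applies identically on the two sides and no spurious crossing is introduced.
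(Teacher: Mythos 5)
Your proposal is correct and hinges on the same local two-node computation as the paper's proof, but the reduction to that computation is organized differently, so a comparison is worth recording. The paper argues by induction on the size of $T$: if any of $S_1,S_2,S_3$ is nonempty it picks a leaf $w$ there, observes that $T-\{w\}$ and $T'-\{w\}$ are still related by the same rotation and hence give the same cycle by the inductive hypothesis, and that $w$ occupies the same relative position in $T$ and $T'$; the explicit calculation (equations \eqref{eq:leftch} and \eqref{eq:rightch}) is performed only in the base case where all three subtrees are empty. You instead appeal to Proposition \ref{prop:process-independent} to fix one processing order per tree (ancestors and siblings of the pivot first, then the two pivot nodes, then the subtrees) and compare directly; the burden then shifts to your flagged claim that every post-gadget insertion lands at the same relative position on both sides. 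That claim is true and routine: the three hanging slots of the gadget sit at relative positions $i$, $i{+}1$, $i{+}2$ in both configurations and slots outside the gadget are unaffected, so the left-to-right slot enumerations of $T$ and $T'$ correspond throughout the rest of the construction. Your route buys a proof with no induction on tree size, at the cost of leaning on Proposition \ref{prop:process-independent} (legitimate, since it is established beforehand and independently of this theorem), whereas the paper's induction only ever needs position-preservation for one leaf at a time. Two small imprecisions that do not affect the argument: on the $T$-side both gadget insertions occur at position $i$ (the description ``$i{+}1$ in position $i$ then $i{+}2$ in position $i{+}1$'' is the $T'$-side), and the identity actually needed is not \eqref{eq:swap-x} (which requires distinct indices $l<m$) but the simpler observation that $\xi_{i+1}$ and $\xi_{i+2}$ agree except on the value $i{+}1$, which occurs only at the gadget entry itself.
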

\begin{proof}
It suffices to show that any single tree rotation results in the same cycle.  The result holds vacuously if $T$ consists only of the unsigned root node.  Suppose now the result holds for all trees smaller than $T$, and suppose $T'$ is related to $T$ by a tree rotation at some vertex $v$. By swapping $T$ and $T'$ if necessary, we suppose this rotation is a ``clockwise'' rotation, which moves the left child of $v$ into the position of $v$, with $v$ becoming the new right child of that vertex in $T'$.  

Note, $v$ and its left child must have the same sign.  If this sign is negative we  can replace $T$ and $T'$ with $\overline{T}$ and $\overline{T'}$ respectively so (by Lemma \ref{lemma:inverse-cycle}) we can suppose $v$ and its left child are both positive ($v$ could be the root, which is unsigned, but we  still assume that its left child is positive, which doesn't change the argument that follows.)  We can draw the relevant portion of $T$ and $T'$ (showing only the vertex $v$ and its descendants) as in Figure \ref{fig:TreeRotation}, where $S_1$, $S_2$ and $S_3$ are the relevant subtrees for descendants of $v$ and its left child. 

Suppose $w$ is a leaf appearing in one of these subtrees.  Since $w$ is not involved in the rotation it is a leaf of both $T$ and $T'$.  The trees $T-\{w\}$ and $T'-\{w\}$ are related by the same tree rotation at $v$ and, since both are smaller than $T$ both correspond to the same cycle $\sigma$, by induction.  Since $w$ appears in the same relative position in both $T$ and $T'$, adding $w$ to either of $T-\{w\}$ or $T'-\{w\}$ would produce the same change to $\sigma$, thus $T$ and $T'$ correspond to the same cycle.

This leaves us with the case where all the subtrees $S_1$, $S_2$ and $S_3$ are empty. Then the vertex $v$ in $T$ has a single positive left child $w$ (with no children) while in $T'$ the vertex $w$ only has a right child, $v$. Note that $T-\{w\}$ is the exact same tree as $T'-\{v\}$, let $i$ be the relative position of $v$ in $T-\{w\}$ and let $\sigma$ be the cycle associated to it.   Since $v$ is a positive leaf of $T-\{w\}$ we can write \[\sigma = s_1s_2,\ldots s_{i-1}\ [i+1]\ s_{i+1}\ldots s_n\]
where $s_j \neq i+1$.  Now, the left and right children of $v$ correspond to the positions $i$ and $i+1$ respectively.  We check what occurs to the cycle $\sigma$ when a positive node is inserted in either place.  If a positive left child is inserted (creating $T$) then our rules dictate that an $i+1$ is inserted in position $i$ of $\sigma$, and all elements of sigma with value $i+1$ or higher are increased by 1.  Thus this insertion results in
\begin{align}
    \xi_{i+1}&(s_1)\xi_{i+1}(s_2)\ldots \xi_{i+1}(s_{i-1})\ [i+1]\ \xi_{i+1}(i+1)\xi_{i+1}(s_{i+1})\ldots \xi_{i+1}(s_n) \nonumber \\
    &=\xi_{i+1}(s_1)\xi_{i+1}(s_2)\ldots \xi_{i+1}(s_{i-1})\ [i+1][i+2]\ \xi_{i+1}(s_{i+1})\ldots \xi_{i+1}(s_n). \label{eq:leftch}
\end{align}
If a positive vertex is inserted as the right child (position $i+1$) of $v$ (creating $T'$) then our rules dictate that an $i+2$ is inserted in position $i+1$ of $\sigma$, and all elements of sigma with value $i+2$ or higher is increased by 1.  Thus this insertion results in 
\begin{align}
    \xi_{i+2}&(s_1)\xi_{i+2}(s_2)\ldots \xi_{i+2}(s_{i-1})\xi_{i+2}(i+1)\ [i+2]\ \xi_{i+2}(s_{i+1})\ldots \xi_{i+2}(s_n) \nonumber \\
    &=\xi_{i+2}(s_1)\xi_{i+2}(s_2)\ldots \xi_{i+2}(s_{i-1})\ [i+1][i+2]\ \xi_{i+2}(s_{i+1})\ldots \xi_{i+2}(s_n). \label{eq:rightch}
\end{align}
Since $\xi_{i+1}(s_j) = \xi_{i+2}(s_j)$ so long as $s_j \neq i+1$, we see that \eqref{eq:leftch} and \eqref{eq:rightch} are the same expression, and so $T$ and $T'$ correspond again to the same cycle. \end{proof}


\label{sec:welldef}

\section{Surjectivity} \label{sec:topology}
In this section we prove that the map from rooted-signed-binary trees to unknotted cycles is surjective. The proof relies on Bennequin's inequality \cite{Benn}, or more precisely a reformulation of it for Legendrian knots \cite{Eliash} {--} an important early result in modern contact geometry.  We begin with some notation.

\begin{definition} \label{def:Cpair-URindex}
Let $\sigma$ be a derangement. Define $(i, j)$ to be a \emph{$C$-pair} if either
    \begin{align*}
        i < j < \sigma(i) < \sigma(j) \qquad\text{or}\qquad
        i > j > \sigma(i) > \sigma(j).
    \end{align*}
Additionally, define $i$ to be a \emph{$UR$-index} if 
        \[\sigma^{-1}(i) < i \qquad\text{and}\qquad \sigma(i) < i.\]
\end{definition}
\begin{remark}
Note that $(i, j)$ is a $C$-pair if and only if two segments of the cycle diagram cross each other, one of the segments occurring between diagonal points $(i,i)$ and $(\sigma(i), \sigma(i))$ and the other occurring between $(j,j)$ and $(\sigma(j), \sigma(j))$.  Such a pair appears as depicted in the left-most image of Figure \ref{fig:C-UR} (or its reflection across the diagonal). All crossings in cycle diagrams are of this type. 

Also, $i$ is a $UR$-index if and only if the cycle diagram has an upper right corner at $(i,i)$, as in the right-most image of Figure \ref{fig:C-UR}.
\end{remark}

\begin{figure}[ht]
    \centering
    \begin{tikzpicture}[scale=0.3]
        \draw[help lines] (-0.5,-0.5) grid (5.5, 5.5);
        \draw[dashed]   (0.5,0.5) -- (0.5,3.5) -- (3.5,3.5)
                        (1.5,1.5) -- (1.5,4.5) -- (4.5,4.5);
        \draw   (0.5,3.5) [fill] circle (.18)
                (1.5,4.5) [fill] circle (.18);
        \draw[help lines] (8.5,-0.5) grid (11.5,2.5);
        \draw[dashed]   (8.5,1.5) -- (10.5,1.5) -- (10.5,-0.5);
    \end{tikzpicture}
    \caption{The cycle diagram at a $C$-pair (left) and $UR$-index (right).}
    \label{fig:C-UR}
\end{figure}
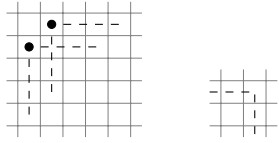

\begin{prop}[Bennequin-Eliashberg Inequality] \label{prop:Benn-Inequality}
Let $D$ be a cycle diagram and let $K(D)$ be the knot associated to $D$. Define $C(D)$ to be the number of $C$-pairs of $D$ and $UR(D)$ the number of $UR$-indices. Recalling that $g(K)$ is the genus of $K$, then 
        \[C(D) - UR(D) \le 2g(K) - 1.\]
\end{prop}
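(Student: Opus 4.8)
The plan is to read off the Thurston--Bennequin number of the Legendrian knot $\Lambda(D)$ associated to $D$ in Section~\ref{subsec:surfaces}, and then apply the Bennequin--Eliashberg inequality $\operatorname{tb}(\Lambda) \le 2g(K)-1$. Recall three facts already established: $\operatorname{tb}(\Lambda(D)) = \text{writhe}(F_{\Lambda(D)}) - \tfrac12\#\text{cusps}(F_{\Lambda(D)})$; the regular link $\Lambda(D)$ is equivalent to the mirror of $K(D)$; and the mirror of $K(D)$ has the same genus as $K(D)$. So the entire content of the proposition reduces to the single identity $\operatorname{tb}(\Lambda(D)) = C(D) - UR(D)$, after which Bennequin--Eliashberg gives $C(D)-UR(D) = \operatorname{tb}(\Lambda(D)) \le 2g(K(D))-1$.

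For the writhe term: by the Remark following Definition~\ref{def:Cpair-URindex}, the crossings of $D$ are in bijection with the $C$-pairs, so $D$ has exactly $C(D)$ crossings; each is negative in $D$, hence positive in $\Lambda(D)$, and since rotating $45^\circ$, smoothing corners, and forming cusps neither create nor destroy crossings, $\text{writhe}(F_{\Lambda(D)}) = C(D)$.

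For the cusp term: the cusps of $F_{\Lambda(D)}$ are precisely the images of the lower-left and upper-right corners of $D$, the upper-left and lower-right corners instead becoming smooth vertical extrema. Every corner of $D$ sits either at a plotted point $(i,\sigma(i))$ or at a diagonal point $(i,i)$, and one first checks that a diagonal point of a derangement is always a genuine corner (one incident segment is horizontal, the other vertical, and neither is degenerate), so there are exactly $2n$ corners to classify. A short case analysis on the two directions of the segments at each corner shows: a plotted point is always an upper-left corner (if $\sigma(i)>i$) or a lower-right corner (if $\sigma(i)<i$); and the corner at $(i,i)$ is upper-right exactly when $\sigma^{-1}(i)<i$ and $\sigma(i)<i$ (i.e.\ $i$ is a $UR$-index), lower-left exactly when $\sigma^{-1}(i)>i$ and $\sigma(i)>i$, and upper-left or lower-right otherwise. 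Writing $\sigma$ as a cyclic word, the $UR$-indices are its ``peaks'' and the lower-left diagonal corners its ``valleys''; since peaks and valleys alternate around a cyclic sequence of distinct values, these two sets are equinumerous, whence $\#\text{cusps}(F_{\Lambda(D)}) = UR(D) + UR(D) = 2\,UR(D)$ and $\tfrac12\#\text{cusps}(F_{\Lambda(D)}) = UR(D)$. Combining with the writhe computation yields $\operatorname{tb}(\Lambda(D)) = C(D) - UR(D)$, as needed.

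I expect the only genuine work to be the corner classification and the peaks-equal-valleys count; both are elementary, and all topology (the $\operatorname{tb}$ formula, the identification of $\Lambda(D)$ with the mirror of $K(D)$, and Bennequin--Eliashberg) is quoted directly from Section~\ref{sec:knots_intro}. The most error-prone point is keeping the orientation conventions straight so that ``lower-left/upper-right $\mapsto$ cusp'' really does match the $UR$-index condition after the clockwise rotation, and double-checking that the bijection between crossings and $C$-pairs is one-to-one so that $\text{writhe}(F_{\Lambda(D)})$ is exactly $C(D)$ rather than some multiple of it.
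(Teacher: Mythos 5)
Your proposal is correct and follows essentially the same route as the paper: compute $\operatorname{tb}(\Lambda(D))$ from the front projection (writhe $=C(D)$ since all crossings become positive, half the cusp count $=UR(D)$) and then invoke Bennequin--Eliashberg together with the facts that $\Lambda(D)$ is the mirror of $K(D)$ and that genus is mirror-invariant. The only difference is that you explicitly justify $\#\text{cusps}=2\,UR(D)$ by classifying the corners and matching upper-right with lower-left diagonal corners via the peaks-equal-valleys count, a detail the paper leaves implicit.
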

\begin{proof}
Let $\Lambda(D)$ be the Legendrian defined in Section \ref{sec:knots_intro}.  Cycle diagrams have all negative crossings, and so recall that each crossing of the front projection of $\Lambda(D)$ is positive, and so its writhe is $C(D)$.  Points on $D$ at a $UR$-index correspond to right cusps, and so
        \[C(D) - UR(D) = \text{tb}(\Lambda(D)).\]
Recalling that the mirror of $K(D)$ is equivalent to $\Lambda(D)$ (simply as links), and that genus does not change under mirroring, the Bennequin-Eliashberg inequality gives $\text{tb}(\Lambda(D)) \le 2g(K) - 1$.
\end{proof}

\begin{remark}A tighter result may be obtained by using Lagrangian fillings and results in \cites{Chan,HS}, but it is not necessary for our purposes.
\end{remark}

\begin{prop} \label{lem:C-UR}
Suppose that $\sigma$ is an $n$-cycle where $|\sigma(i)-i|\geq 2$ for all $1\leq i \leq n$. Then the number of $C$-pairs of $\sigma$ is at least as many as the number of $UR$-indices. 
\end{prop}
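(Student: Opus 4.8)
The plan is to set up a counting argument that pairs each $UR$-index with a $C$-pair in an injective way, exploiting the hypothesis $|\sigma(i)-i|\ge 2$. First I would fix an $n$-cycle $\sigma$ satisfying the hypothesis and let $i$ be a $UR$-index, so that $\sigma^{-1}(i)<i$ and $\sigma(i)<i$; geometrically the cycle diagram has an upper-right corner at $(i,i)$. The key local observation is that near such a corner, the segment entering $(i,i)$ from above (the vertical segment running from $(\sigma^{-1}(i),\sigma^{-1}(i))$ down to roughly height $i$, actually the segment between the diagonal points for $\sigma^{-1}(i)$) and the segment leaving to the left must each travel a \emph{distance at least $2$} along the diagonal, by the hypothesis. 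I want to use this to force a crossing. Concretely, consider the index $j=\sigma^{-1}(i)$ (so $\sigma(j)=i$) and also $k=\sigma(i)$: these are the two ``neighbours'' of $i$ in the cycle, and both lie strictly below $i$, in fact at distance $\ge 2$.

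The main step is to produce, for each $UR$-index $i$, a distinguished $C$-pair $\Phi(i)$ and to check that $\Phi$ is injective. The natural candidate: since the vertical/horizontal pair of segments attached at the corner $(i,i)$ reaches down to the diagonal point $(j,j)$ with $j\le i-2$ (or $(k,k)$ with $k\le i-2$), there is ``room'' between $j$ and $i$ (at least one intermediate diagonal point) through which some other cycle segment must pass, and I would argue that one of these segments genuinely crosses the pair at $i$. More carefully, I expect to run an induction on $n$: delete a well-chosen element of $\sigma$ (e.g.\ one of the two diagonal points adjacent to a $UR$-index, chosen so that the deletion preserves the ``gap $\ge 2$'' property, or else handle the boundary case separately by tracking how $C(\sigma)$ and $UR(\sigma)$ change), apply the inductive inequality to the smaller cycle, and show the deletion destroys at least as many $C$-pairs as $UR$-indices. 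Alternatively, a direct combinatorial double-counting: for each $UR$-index $i$, charge it to the crossing formed by the two segments at the corner $(i,i)$ together with whichever segment ``spans the gap'' just below $(i,i)$; distinct $UR$-indices give distinct charged crossings because the corner location determines the crossing.

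The hard part will be proving that the gap below a $UR$-corner is actually \emph{crossed} by another segment — a priori the region between diagonal points $(j,j)$ and $(i,i)$ could be empty of crossings if the relevant segments nest rather than cross. I expect to resolve this by a parity or connectivity argument: $\sigma$ is a single $n$-cycle, so the cycle diagram is connected as a curve, and the two segments meeting at the $UR$-corner at $(i,i)$ both point ``inward'' (down and left), so tracing the cycle away from $(i,i)$ in both directions it must eventually return, and the topology of a connected closed curve in the plane forces a crossing in the spanned region when the arc lengths along the diagonal exceed $1$. Making ``forces a crossing'' precise — probably via an intermediate-value / Jordan-curve style argument applied to the specific rectilinear structure of cycle diagrams, or by an explicit induction that carefully tracks one $UR$-index and one $C$-pair at a time — is where the real work lies; the rest is bookkeeping with the hypothesis $|\sigma(i)-i|\ge 2$.
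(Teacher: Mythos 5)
Your write-up is a plan rather than a proof: you correctly identify the crux (``proving that the gap below a $UR$-corner is actually crossed by another segment \ldots is where the real work lies'') and then leave that crux unresolved. Worse, the specific charging scheme you propose fails on small examples. Take $\sigma=34512$ (the $5$-cycle $(1\,3\,5\,2\,4)$, with $|\sigma(i)-i|\ge 2$ for all $i$). Here $i=5$ is a $UR$-index, and the two segments meeting at the corner $(5,5)$ are the horizontal segment from $(3,5)$ to $(5,5)$ and the vertical segment from $(5,5)$ down to $(5,2)$; neither of these is crossed by any other segment of the diagram. So there is no ``distinguished $C$-pair $\Phi(5)$ formed by the two segments at the corner,'' and your intuition that the two inward-pointing segments at a $UR$-corner must themselves be crossed is simply false. (The inequality still holds in this example -- there are $3$ $C$-pairs, namely $(1,2)$, $(2,3)$, $(5,4)$, against $2$ $UR$-indices -- but the crossings that save the count live elsewhere in the diagram, and the one crossing ``in the region below'' $(5,5)$, at $(3,4)$, is the same crossing one would naturally charge to the other $UR$-index $i=4$, so relaxing the charging to ``some crossing in the spanned region'' breaks injectivity.) Your fallback induction has a parallel problem: deleting a diagonal point of an $n$-cycle with all gaps $\ge 2$ essentially never preserves the hypothesis (there is no $4$-cycle with all gaps $\ge 2$ at all), so the ``boundary case'' you defer is in fact the whole argument.

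The paper avoids any local charging and argues globally via Seifert circles: it shows that the number of Seifert circles of the cycle diagram equals $UR(D_\sigma)$ (each Seifert circle passes through exactly one upper-right diagonal corner), that there is a unique maximal Seifert circle under inclusion, and that -- using the hypothesis $|\sigma(i)-i|\ge 2$ to show any non-maximal circle must contain at least two ``length-one steps,'' each of which can only arise from a smoothed crossing -- every Seifert circle except possibly the maximal one meets at least two crossings. Since each crossing is shared by exactly two Seifert circles, this gives $\#\text{crossings}\ge\lceil(2s-1)/2\rceil=s=UR(D_\sigma)$. In the example above, the two Seifert circles through $(4,4)$ and $(5,5)$ each absorb two of the three crossings (one crossing being shared), which is exactly the global bookkeeping your local injection cannot see.
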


Let $D_\sigma$ be the cycle diagram of $\sigma$. Our proof uses the Seifert circles of $D_\sigma$, introduced in Section \ref{subsec:surfaces}. Recall the orientation convention for $D_\sigma$ (found in Section \ref{sec:knots_intro}).  To each Seifert circle $S$ associate a set $\mathcal C(S)$ of crossings of $D_\sigma$ {---} the crossings which, when smoothed, created a portion of the Seifert circle. 

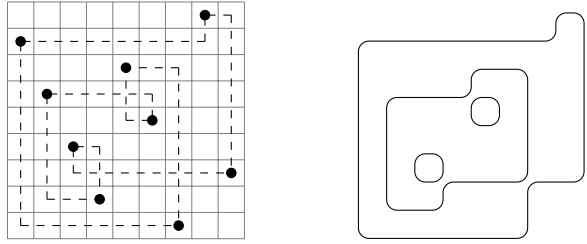
\begin{figure}[ht]
    \centering
    \cyclefig{8,6,4,2,7,5,1,9,3} 
    \hspace{1cm} \begin{tikzpicture}[scale=3.75]
    \draw [rounded corners] (0,0)--(0,0.7)--(0.7,0.7)--(0.7,0.8)--(0.8,0.8)--(0.8,0.2)--(0.6,0.2)--(0.6,0.0)--cycle;
    \draw [rounded corners] (0.1,0.1)--(0.1,0.5)--(0.4,0.5)--(0.4,0.6)--(0.6,0.6)--(0.6,0.2)--(0.3,0.2)--(0.3,0.1)--cycle;
    \draw [rounded corners] (0.2,0.2)--(0.2,0.3)--(0.3,0.3)--(0.3,0.2)--cycle;
    \draw [rounded corners] (0.4,0.4)--(0.4,0.5)--(0.5,0.5)--(0.5,0.4)--cycle;
\end{tikzpicture}
    \caption{The cycle diagram of the unknotted cycle 864275193 and the corresponding Seifert circles obtained by smoothing the three crossings.}
    \label{fig:my_label}
\end{figure}

To prove the proposition, we will use few lemmas.
\begin{lemma}\label{lem:UR=n_SeifertCircles}
The number of Seifert circles of $D_\sigma$ is equal to the number of $UR$-indices of $D_\sigma$.
\end{lemma}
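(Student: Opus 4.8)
The plan is to pass to a turning-number count. After smoothing all crossings of $D_\sigma$ one obtains a disjoint union of simple closed curves; isotoping inside a small neighbourhood of each former crossing, we may assume each of them is a rectilinear curve, built of horizontal and vertical edges, carrying the orientation induced by the orientation convention for cycle diagrams. Every corner of such a curve is a quarter-turn ($\pm\frac{\pi}{2}$), either a ``right turn'' or a ``left turn'', and by the theorem of turning tangents the number of right turns minus the number of left turns along a simple closed rectilinear curve is $+4$ if it is traversed clockwise and $-4$ if counterclockwise. Hence it suffices to prove two things: (i) $\sum_S\bigl(R(S)-L(S)\bigr)=4u$, where the sum runs over the Seifert circles $S$ of $D_\sigma$, $R(S)$ and $L(S)$ count the right and left turns along $S$, and $u=UR(D_\sigma)$ is the number of $UR$-indices; and (ii) every Seifert circle of $D_\sigma$ is clockwise. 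Given these, $4\cdot\#\{\text{Seifert circles}\}=\sum_S\bigl(R(S)-L(S)\bigr)=4u$, which is the lemma.

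For (i), I would classify the corners of the smoothed diagram into three families and record the turn at each. Each corner inherited from an off-diagonal point $(i,\sigma(i))$ of the cycle diagram is a right turn (the orientation convention makes the vertical-to-horizontal bend turn ``towards'' the diagonal), so these contribute $n$ right turns and no left turns. Each smoothed crossing contributes exactly one right turn and one left turn: in a cycle diagram every crossing is a vertical (over)strand meeting a horizontal (under)strand in one of two orientation patterns, above or below the diagonal, and inspecting the two smoothing pictures against each pattern shows the two new corners are one right and one left turn, so these cancel. Finally, the corner at a diagonal point $(i,i)$ is a right turn exactly when $\sigma^{-1}(i)-i$ and $\sigma(i)-i$ have the same sign — equivalently, when $i$ is a $UR$-index, or when $\sigma^{-1}(i)>i$ and $\sigma(i)>i$ — and a left turn otherwise. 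An elementary identity finishes the count: since $\#\{i:\sigma^{-1}(i)<i\}=\#\{i:\sigma(i)>i\}$ (substitute $i\mapsto\sigma(i)$), splitting each side according to the complementary inequality yields $\#\{i:\sigma^{-1}(i)>i,\ \sigma(i)>i\}=u$, so among the $n$ diagonal corners there are $2u$ right turns and $n-2u$ left turns. Summing the three families, $\sum_S\bigl(R(S)-L(S)\bigr)=\bigl(2u-(n-2u)\bigr)+n+0=4u$.

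The substance is (ii). Let $S$ be a Seifert circle and $h$ the largest height attained on $S$. Since the smoothing modifications are local, $S$ contains a horizontal edge $e$ at height $h$ lying on a horizontal segment of $D_\sigma$, say the horizontal part of step $k$, so $h=\sigma(k)$; I claim $\sigma(k)>k$. Suppose not, so $\sigma(k)<k$; then this horizontal segment points leftward, from $(k,\sigma(k))$ to $(\sigma(k),\sigma(k))$, and I examine the right-hand end of $e$. If that end is the off-diagonal corner $(k,\sigma(k))$ of step $k$, then $S$ also contains the vertical part of step $k$ above $(k,\sigma(k))$, which lies at heights greater than $h$ — impossible. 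Otherwise the right-hand end of $e$ is a crossing, necessarily of the ``below the diagonal'' type, and the form of that smoothing forces the strand of $S$ leaving along $e$ to have arrived by descending a vertical segment into the crossing; since the crossing is at height $h$, $S$ again contains points of height above $h$, a contradiction. Hence $\sigma(k)>k$, so $e$ lies on a horizontal segment above the diagonal and is traversed rightward; a simple closed curve whose topmost edge is traversed left-to-right is clockwise, proving (ii).

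I expect step (ii), and within it the case analysis at the right-hand end of the topmost edge, to be the main obstacle: one must check in each case — $S$ continuing along an original corner of $D_\sigma$, or $S$ turning at a smoothed crossing — that the curve is forced to rise above height $h$, and this uses both the orientation convention and the explicit form of the two local smoothings. By contrast the bookkeeping in (i) is routine once the turn at each corner type is identified, and the counting identity $\#\{i:\sigma^{-1}(i)>i,\ \sigma(i)>i\}=u$ holds for any derangement (the hypothesis $|\sigma(i)-i|\ge 2$ of Proposition \ref{lem:C-UR} is not needed for this lemma).
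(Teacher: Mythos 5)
Your argument is correct, but it is genuinely different from the one in the paper. You compute a global turning number: every corner of the smoothed rectilinear diagram is classified as a quarter-turn, the $n$ off-diagonal corners and the $2u$ same-sign diagonal corners are right turns, the smoothed crossings cancel, and the identity $\#\{i:\sigma^{-1}(i)>i,\ \sigma(i)>i\}=\#\{i:\sigma^{-1}(i)<i,\ \sigma(i)<i\}$ gives a total of $4u$; combined with your (correct, and genuinely the crux) verification that each Seifert circle is clockwise, this yields $u$ circles. The paper instead argues locally with the function $x+y$: smoothing never creates a local extremum of $x+y$, so every local maximum of $x+y$ on a Seifert circle sits at a $UR$-index; each circle has at least one such maximum, and cannot have two because every diagonal crossing passes from above the diagonal to below it, so a circle through two $UR$-indices would spiral. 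The trade-off: your proof is a clean bookkeeping computation with no informal step (the paper's ``spiral'' argument is the only soft spot in its proof), but it establishes only the equality of cardinalities, whereas the paper's proof produces an explicit bijection --- each Seifert circle passes through exactly one $UR$-index (and, by the analogous argument, one $LL$-index) --- and that stronger structural fact is what Lemmas \ref{lem:unique-maximal} and \ref{lem:number-crossings-Seifert-circle} actually use when they speak of ``the $UR$-index of $S$.'' So your proof suffices for the statement as written, but to support the rest of Section \ref{sec:topology} one would still want the paper's pointwise version (which your clockwise-orientation argument nearly gives you: a clockwise simple closed rectilinear curve has exactly four more right than left turns, and your corner census then forces exactly one $UR$-corner and one $LL$-corner per circle, recovering the bijection). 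Your closing remark that the hypothesis $|\sigma(i)-i|\ge 2$ of Proposition \ref{lem:C-UR} is not needed here is also correct; the paper likewise does not use it in this lemma.
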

\begin{proof}
First, make a key observation: due to the orientation on $D_\sigma$, smoothing a crossing cannot \emph{create} a local extremum of the function $x+y$ on a Seifert circle. Therefore, any such local extremum must correspond to a point on $D_\sigma$ itself.  So it is a diagonal point, at a $UR$-index if it is a local maximum.

As a Seifert circle is a closed planar curve, it has at least one local maximum of $x+y$, and so passes through at least one diagonal point that is at a $UR$-index. 

The Seifert circles of $D_\sigma$ have an induced orientation that agrees with the orientation on $D_\sigma$ away from the crossings (oriented clockwise in the case of cycle diagrams).  If $S$ is a Seifert circle of $D_\sigma$, then at each point where $S$ passes through the diagonal, it goes from above the diagonal to below it. Hence, $S$ cannot pass through two $UR$-indices since, were it to do so, it would spiral either inward or outward and be unable to form a simple closed curve. Thus, every Seifert circle passes through exactly one $UR$-index.
\end{proof}
\begin{remark}
Similarly, there is exactly one lower-left corner on the diagonal (an $LL$-index, say) contained in any Seifert circle of a cycle diagram.
\end{remark} 

Now, place a partial ordering on the set of Seifert circles. If $S$ and $S'$ are Seifert circles, then we say that $S' \prec S$ when $S'$ is contained in the bounded planar region that is enclosed by $S$.

\begin{lemma}\label{lem:unique-maximal}
There is a unique maximal Seifert circle of $D_\sigma$, larger than every other Seifert circle.
\end{lemma}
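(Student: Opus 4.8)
The plan is to show that the nesting partial order $\prec$ on the Seifert circles of $D_\sigma$ — which is automatically a forest, since the Seifert circles are pairwise disjoint simple closed curves in the plane — is in fact a single rooted tree; its root is then the unique maximal circle, and it contains all the others. I would organize the argument around two facts.

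The first fact is combinatorial and uses only that $D_\sigma$ is connected (being the diagram of an $n$-cycle): any two Seifert circles can be joined by a finite chain $S = S^{(0)}, S^{(1)}, \ldots, S^{(k)} = S'$ in which consecutive members share a crossing, i.e.\ arise together when that crossing is smoothed. This holds because smoothing modifies $D_\sigma$ only inside a small disk around each crossing point, so the union of all the Seifert circles together with one such disk per crossing is connected; collapsing this set onto the ``adjacency graph'' — one vertex per Seifert circle, one edge per crossing — exhibits that graph as connected.

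The second fact is the heart of the matter, and it is where the orientation convention on $D_\sigma$ is used: two Seifert circles arising from a common crossing $p$ are always strictly nested, one inside the other. Every crossing of a cycle diagram is, up to reflection across $y=x$, a vertical overstrand oriented upward meeting a horizontal understrand oriented rightward, and coherent smoothing (Figure \ref{fig:smoothing}) replaces the crossing by two small arcs: an arc $\alpha$ from the incoming (west) end of the understrand to the outgoing (north) end of the overstrand, hugging the northwest of $p$, and an arc $\beta$ from the incoming (south) end of the overstrand to the outgoing (east) end of the understrand, hugging the southeast of $p$. Writing $S_\alpha, S_\beta$ for the Seifert circles carrying $\alpha, \beta$, and using that both inherit the clockwise orientation of $D_\sigma$, a short local check of which side of each arc its circle's interior lies on shows that $p$ lies in the interior of $S_\alpha$ but in the exterior of $S_\beta$ — in particular $S_\alpha \ne S_\beta$ — and that $\beta$ itself lies in a neighborhood of $p$ on the interior side of $\alpha$. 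Since $S_\beta$ is connected and disjoint from $S_\alpha$, this forces $S_\beta \subset \operatorname{int}(S_\alpha)$, i.e.\ $S_\beta \prec S_\alpha$.

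Combining the two: every edge of the adjacency graph joins two circles that are strictly nested, hence lie in the same tree of the forest $\prec$; since the adjacency graph is connected, $\prec$ has only one tree, so there is a unique maximal Seifert circle, which therefore contains every other one. I expect the main obstacle to be the bookkeeping in the second step — correctly reading off, from the smoothing rule together with the clockwise orientation, that it is $S_\alpha$ (and not $S_\beta$) whose interior contains $p$ — but this is a single finite local computation, and the conclusion ``strictly nested'' is symmetric under the $y=x$ reflection that produces the only other type of crossing, so no separate case is needed.
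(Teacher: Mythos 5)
Your proof is correct, and it reaches the conclusion by a route that differs from the paper's in its key step. Both arguments share the same skeleton: connectivity of $D_\sigma$ (because $\sigma$ is a single cycle) forces the ``adjacency graph'' of Seifert circles to be connected, and one then shows that two Seifert circles sharing a crossing must be nested. Where you differ is in how that nesting is established. The paper reuses the diagonal machinery of Lemma \ref{lem:UR=n_SeifertCircles}: each Seifert circle has a unique $UR$-index and a unique $LL$-index, two non-nested circles must have disjoint diagonal spans, and disjoint spans preclude a shared crossing; this yields nesting only by contradiction and leans on structure already built for the surrounding lemmas. You instead do a purely local computation at the crossing, using the clockwise orientation to read off that the crossing point lies in the interior of the circle carrying the northwest smoothed arc and in the exterior of the circle carrying the southeast arc, which immediately gives strict nesting \emph{and} identifies which circle contains which. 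I checked the local computation (for a clockwise curve the interior lies to the right of the direction of travel, so the NW arc, traversed east-then-north, has $p$ and the SE arc on its interior side, while the SE arc, traversed north-then-east, has $p$ on its exterior side), and it is right; the below-diagonal case follows as you say since nesting is preserved by the reflection. Your version is more self-contained and arguably cleaner as a tree argument (every edge of a connected adjacency graph joins comparable circles, so the nesting forest is a single rooted tree); the paper's version is shorter on the page because it amortizes work into Lemma \ref{lem:UR=n_SeifertCircles}. If you write this up, the one step worth making fully explicit is the passage from ``one point of the SE arc lies in $\operatorname{int}(S_\alpha)$'' to ``all of $S_\beta$ lies in $\operatorname{int}(S_\alpha)$'': it uses that $S_\beta$ is connected and disjoint from $S_\alpha$, exactly as you indicate.
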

\begin{proof} Let $S$ be a maximal Seifert circle (in some maximal chain). If there exist Seifert circles in the unbounded region of $S$, then at least one such circle $S'$ must share a crossing with $S$.  Otherwise it would not be a knot ($\sigma$ would not be a cycle). 

Now, suppose that the $UR$-index for $S$ is less than the $UR$-index for $S'$. Since $S\not\prec S'$, the $UR$-index of $S$ is also less than the unique $LL$-index of $S'$. However, this makes it impossible for there to be a crossing in $\mathcal C(S) \cap \mathcal C(S')$. Likewise, if we suppose that the $UR$-index of $S$ is greater than that of $S'$, then the $LL$-index of $S$ must also be larger, making a shared crossing impossible. 

Hence, every other Seifert circle is in the bounded region of $S$.
\end{proof}

\begin{lemma}\label{lem:number-crossings-Seifert-circle}
If $D_\sigma$ has a crossing, then any Seifert circle $S$ has at least 1 associated crossing. Also, under the hypothesis of Lemma \ref{lem:C-UR}, if $|\mathcal C(S)|=1$, then $S$ is the maximal Seifert circle.
\end{lemma}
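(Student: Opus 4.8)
\emph{First assertion.} The plan is to note that the underlying closed curve of $D_\sigma$, regarded as a $4$-valent graph with the crossings as its only vertices (every corner has degree two), is connected precisely because $\sigma$ is a single $n$-cycle. A Seifert circle $S$ with $\mathcal C(S)=\emptyset$ uses no arc produced by a smoothing, hence passes through no crossing point; at every corner it is then forced to continue along the unique remaining edge, so $S$ must be the entire underlying curve of $D_\sigma$. But that curve contains the crossing $D_\sigma$ is assumed to have, while $S$ avoids all crossings --- a contradiction. Hence $|\mathcal C(S)|\ge 1$.

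\emph{Second assertion.} Assume $|\mathcal C(S)|=1$ with single associated crossing $c$. I would work with the Seifert graph $G$: its vertices are the Seifert circles, and each crossing contributes one edge joining the circle(s) carrying the two arcs obtained by smoothing it; since $D_\sigma$ is connected, so is $G$. If both arcs of $c$ lie on $S$, then $c$ is a loop at $S$ and, as $\mathcal C(S)=\{c\}$, the vertex $S$ is an entire component of $G$; connectedness then forces $S$ to be the only Seifert circle, so it is trivially maximal. Thus we may assume $S$ is a leaf of $G$: it uses exactly one of the two arcs of $c$, is joined by $c$ to a distinct Seifert circle $S'$, and away from that arc it consists only of full, crossing-free segments of $D_\sigma$.

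Next I would use that cycle diagrams --- and hence all their Seifert circles --- are oriented clockwise. Since $S$ and $S'$ share the crossing $c$, one of the two encloses the other; a short local computation at $c$ shows that of the two arcs obtained by smoothing, exactly one has the property that the old crossing point $c$ lies in the interior of the circle carrying it. If that is the arc used by $S$, write $R_S$ for the region bounded by $S$; then $S'\prec S$, the only crossing incident to $S$ is $c$, and its other arc lies on $S'\subseteq R_S$, so no crossing of $D_\sigma$ joins $S$ to anything outside $R_S$. Connectedness of $G$ then forces $R_S$ to contain all of $D_\sigma$, i.e.\ $S$ is the maximal Seifert circle.

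The remaining, and decisive, case is that $S$ uses the other arc of $c$, so that $c\notin R_S$ and $S\prec S'$; the aim is to show this is incompatible with the hypothesis $|\sigma(i)-i|\ge 2$ of Lemma \ref{lem:C-UR}. After reflecting across the diagonal if needed (Lemma \ref{lemma:inverse-cycle}), take $c$ to be the crossing of two above-diagonal segments $a$ and $b$ with $a<b<\sigma(a)<\sigma(b)$, so $c=(b,\sigma(a))$ and the arc used by $S$ runs up segment $b$ from $(b,b)$ to $c$ and then rightward along the horizontal part of segment $a$ to $(\sigma(a),\sigma(a))$. One then checks that $S$ is a rectilinear simple closed curve bounding a single face $R_S$ of $D_\sigma$ (no crossing of $D_\sigma$ can lie inside $R_S$), whose boundary walk is that arc followed by the full crossing-free segments $\sigma(a),\sigma^2(a),\dots$ returning to $(b,b)$. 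I would finish by tracing the induced clockwise orientation around $\partial R_S$, using that each of these full segments moves between diagonal points whose values differ by at least two and that $a<b<\sigma(a)<\sigma(b)$, to conclude that a face of this shape can close up only when it is the complement of the unbounded face --- that is, $S$ is the outermost circle, contradicting $S\prec S'$. I expect this last orientation-bookkeeping step --- verifying that $|\sigma(i)-i|\ge 2$ bars the configuration of this final case except at the outermost level --- to be the main obstacle; everything else reduces to connectedness of $G$ and local pictures at $c$.
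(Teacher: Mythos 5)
Your first assertion and your reduction for the second are essentially the paper's: a Seifert circle with no associated crossing would be a closed component of the unsmoothed diagram, impossible for a single cycle whose diagram has a crossing elsewhere; and your Seifert-graph/nesting analysis correctly narrows the second assertion to the case where $S$ is innermost, bounds a crossing-free region $R_S$, and meets the rest of the diagram only through the single crossing $c$. This is exactly the paper's reduction to ``$S$ is minimal in the partial order $\prec$'' (the non-minimal, non-maximal case being excluded by connectedness, as you also argue).

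The decisive step, however, is missing. You explicitly defer the one place where the hypothesis $|\sigma(i)-i|\ge 2$ must enter (``I expect this last orientation-bookkeeping step \dots to be the main obstacle''), and the target you propose for that step --- tracing the clockwise orientation around $\partial R_S$ to conclude that $S$ is outermost --- is not the right statement to aim for and would not close the argument: what is needed is that a \emph{minimal} Seifert circle has at least two associated crossings, and orientation bookkeeping alone cannot produce a second crossing. The paper supplies this as follows. A minimal circle $S$ must pass through every diagonal point between its $LL$-index $\ell$ and its $UR$-index $r$, so it decomposes into ``steps'' joining consecutive diagonal points, each with one vertical and one horizontal part. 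Any step of length $1$, from $(i,i)$ to $(i+1,i+1)$, cannot be a segment of $D_\sigma$ because $|\sigma(i)-i|\ge 2$, hence must arise from a smoothed crossing at $(i,i+1)$. If $r-\ell=1$ there are two such steps immediately; if $r-\ell>1$, the diagonal points $\ell+1$ and $r-1$ each lie on two steps on the same side of the diagonal and are not $LL$- or $UR$-indices, which forces a length-$1$ step adjacent to $\ell$ and another adjacent to $r$, and these are distinct. Hence $|\mathcal C(S)|\ge 2$ for every minimal circle, so a circle with one crossing must be maximal (and unique by Lemma \ref{lem:unique-maximal}). Without this step-counting argument, or an equivalent concrete use of the hypothesis, the lemma is not established.
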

\begin{proof} The first statement is clear, since otherwise $S$ is a closed curve already in the (unsmoothed) cycle diagram and, as the diagram has a crossing elsewhere, there would need be multiple components (a link, rather than a knot). 

Suppose $\mathcal C(S)$ contains only one crossing. It must be that either $S$ is a minimal element in the partial order, or it is maximal. If this weren't so, then there would be a Seifert circle in both the bounded and unbounded regions of $S$. But the fact that the knot has a single component then necessitates another crossing in $\mathcal C(S)$. 

Now, suppose $S$ is minimal. Let $\ell$ be the $LL$-index of $S$ and let $r$ be the $UR$-index (which is unique by Lemma \ref{lem:UR=n_SeifertCircles}). Since $S$ is minimal, $S$ must intersect every diagonal point between $\ell$ and $r$.  We can divide $S$ into segments, each having one vertical part and one horizontal part.  Each segment will start and end on the diagonal: for some $i$ and $j$ the segment has a vertical segment from $(i,i)$ to $(i,j)$ followed by a horizontal segment to $(j,j)$.  Call such a segment a \emph{step} of $S$, with length $|i-j|$. 

If any step of length 1 occurs in $S$, from $(i,i)$ to $(i+1,i+1)$ for some $i$, then there is a smoothed crossing that was located at $(i,i+1)$ (the step could not have existed in $D_\sigma$ by the hypothesis of Lemma \ref{lem:C-UR}).  So if we show that $S$ has at least two steps of length 1, then we have a contradiction. Note that this is automatic if $r - \ell = 1$ (with one step above the diagonal and one below), so we assume that $r - \ell > 1$.

For $\ell<i<r$, the point $(i,i)$ is part of two steps of $S$, either both above the diagonal or both below. Now consider the point $(\ell+1,\ell+1)$. As it is part of two steps (and is not an $LL$-index), it must be connected to $(\ell,\ell)$, a step of length 1. A similar argument holds for $r-1$ and $r$, giving another step of length 1. These steps are not the same as $r-\ell > 1$. By the above argument, $|\mathcal C(S)|>1$, a contradiction.

Therefore, $S$ cannot be minimal in the partial order when $|\mathcal C(S)|=1$, and $S$ is the (unique) maximal Seifert circle.
\end{proof}

\begin{proof}[Proof of Proposition \ref{lem:C-UR}]
By Lemma \ref{lem:UR=n_SeifertCircles}, we must show that the the number of Seifert circles of $D_\sigma$ is at most the number of crossings.  Let $s$ denote the number of such Seifert circles.  Since each crossing is associated to two Seifert circles, and by Lemma \ref{lem:number-crossings-Seifert-circle} every Seifert circle $S$ has $|\mathcal C(S)|\ge2$ (except possibly the maximal Seifert circle), the total number of crossings must be at least $\left\lceil \frac{2s-1}{2} \right\rceil = s$.
\end{proof}

If $K$ is an unknot, then it bounds a disk and so the Seifert genus is $g(K) = 0$. Hence we obtain the following.

\begin{prop} \label{prop:unknot-has-off-diagonal}
If the link associated to the cycle diagram for $\sigma$ is the (single component) unknot, then there is some $i$ such that $|i - \sigma(i)| = 1$.
\end{prop}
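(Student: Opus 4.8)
The plan is a short proof by contradiction that pits the topological upper bound coming from Bennequin's inequality against the combinatorial lower bound just established. Since the cycle diagram of $\sigma$ is a genuine link diagram, $\sigma$ is a derangement, so $\sigma(i)\ne i$ for all $i$; hence the negation of the desired conclusion is exactly the statement that $|\sigma(i)-i|\ge 2$ for every $i$, which is precisely the hypothesis of Proposition \ref{lem:C-UR}.

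So I would assume, for contradiction, that $|\sigma(i)-i|\ge 2$ for all $i$. First, since $K(D)$ is a single-component unknot it bounds an embedded disk, so its genus is $g(K(D))=0$; substituting into the Bennequin--Eliashberg inequality (Proposition \ref{prop:Benn-Inequality}) gives
\[
C(D)-UR(D)\le 2g(K(D))-1=-1,
\]
so in particular $C(D)<UR(D)$, i.e.\ the cycle diagram $D=D_\sigma$ has strictly fewer $C$-pairs than $UR$-indices. On the other hand, Proposition \ref{lem:C-UR} applies verbatim under the standing assumption $|\sigma(i)-i|\ge 2$ for all $i$, and it asserts $C(D)\ge UR(D)$. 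This contradicts the previous inequality. Therefore some index $i$ must satisfy $|\sigma(i)-i|\le 1$, and since $\sigma(i)\ne i$ this forces $|\sigma(i)-i|=1$, as claimed.

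I do not anticipate any real obstacle here: the substantive work has already been done, first in the Legendrian reformulation of Bennequin's inequality and then in the Seifert-circle counting argument behind Proposition \ref{lem:C-UR}. The only points that need care are bookkeeping ones{--}that ``single-component unknot'' is exactly what makes $K(D)$ a knot of genus zero (so that the knot-theoretic input of Proposition \ref{prop:Benn-Inequality} is available rather than a link version), and that being a derangement rules out the fixed-point case $\sigma(i)=i${--}and both are immediate from the hypotheses.
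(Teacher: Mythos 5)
Your argument is correct and is essentially identical to the paper's own proof: both substitute $g(K)=0$ into the Bennequin--Eliashberg inequality to get $C(D)\le UR(D)-1$, invoke Proposition \ref{lem:C-UR} (in contrapositive form) to produce an index with $|\sigma(i)-i|<2$, and use the derangement condition to conclude $|\sigma(i)-i|=1$. No gaps.
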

\begin{proof}
Let $D_\sigma$ be the cycle diagram. Since $g(K) = 0$, Proposition \ref{prop:Benn-Inequality} implies that $C(D_\sigma) \le UR(D_\sigma) - 1$. By Lemma \ref{lem:C-UR} we must have some $i$ such that $|i - \sigma(i)| < 2$. The fact that $\sigma$ is a derangement thus implies the conclusion.
\end{proof}
\begin{prop}
Every unknotted cycle is obtained from a rooted-signed-binary tree through the construction of Section \ref{sec:bijection}.
\label{prop:surjective}
\end{prop}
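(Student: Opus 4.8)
The plan is to argue by induction on the length $n$ of the unknotted cycle $\sigma$, peeling off one off-diagonal entry at a time so as to run the construction of Section~\ref{sec:bijection} in reverse. For the base case $n=2$, the only $2$-cycle is $21$, which is precisely the cycle assigned to the tree consisting of the unsigned root, so there is nothing to prove. (No derangement of smaller length exists, which is why the induction cannot descend below $n=2$.)

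So let $n\ge 3$, assume the statement for all shorter unknotted cycles, and let $\sigma$ be an unknotted cycle of length $n$. By Proposition~\ref{prop:unknot-has-off-diagonal} there is an index $i$ with $|\sigma(i)-i|=1$. Exactly as in the proof of Lemma~\ref{lem:tree-rotation-leaf}, using Lemma~\ref{lemma:inverse-cycle} together with the fact that the cycle diagram of $\sigma^{-1}$ is the reflection of that of $\sigma$ across $y=x$ (hence a diagram of a mirror of the unknot, that is, again of the unknot), we may reduce to the case $\sigma(i)=i+1$: a rooted-signed-binary tree producing $\sigma^{-1}$ yields, after reversing all non-root signs, one producing $\sigma$. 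Now define $\sigma^{*}$, a permutation of $\{1,\dots,n-1\}$, by deleting the entry in position $i$ of the one-line notation of $\sigma$ (this entry is $i+1$) and then applying $\xi_{i+1}^{-1}$, i.e.\ decreasing by one every remaining value exceeding $i+1$. Concretely $\sigma^{*}$ is the contraction of the $n$-cycle $\sigma$ along the edge $i\to i+1$, so it is an $(n-1)$-cycle; since $n-1\ge 2$ it is a derangement, and applying the positive-insertion rule of Section~\ref{sec:bijection} in relative position $i$ to $\sigma^{*}$ returns $\sigma$, as one checks directly from the formula (using that $\xi_{i+1}$ agrees with $\xi_{i+2}$ away from the value $i+1$, just as in the well-definedness arguments).

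Because $\sigma$ is obtained from $\sigma^{*}$ by inserting an off-diagonal element, the proposition of Section~\ref{sec:bijection} on insertion of off-diagonal elements shows $\sigma^{*}$ and $\sigma$ are associated to the same link; hence $\sigma^{*}$ is an unknotted cycle of length $n-1$, and by the inductive hypothesis it is produced by some rooted-signed-binary tree $T^{*}$. Let $T$ be the rooted-signed-binary tree obtained from $T^{*}$ by attaching a new positively signed leaf in relative position $i$. Processing the nodes of $T$ by first processing those of $T^{*}$ in an order that yields $\sigma^{*}$ and then processing the new leaf, the construction outputs $\sigma$; by Proposition~\ref{prop:process-independent} the resulting cycle does not depend on the processing order. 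Thus $\sigma$ arises from $T$, completing the induction.

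I expect no serious obstacle here: once Proposition~\ref{prop:unknot-has-off-diagonal} is available (and behind it the Bennequin--Eliashberg bound of Proposition~\ref{prop:Benn-Inequality} together with Proposition~\ref{lem:C-UR}), all that remains is the bookkeeping sketched above -- that the reverse insertion $\sigma\mapsto\sigma^{*}$ is exactly edge-contraction of an $n$-cycle and hence stays within $(n-1)$-cycles, that re-inserting recovers $\sigma$, and that the reduction to $\sigma(i)=i+1$ via Lemma~\ref{lemma:inverse-cycle} is legitimate. This is the same routine verification that appears in the injectivity proof, and the present argument is essentially its ``surjective'' mirror image.
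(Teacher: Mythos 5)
Your proof is correct and follows essentially the same route as the paper: both arguments induct on the cycle length, invoke Proposition~\ref{prop:unknot-has-off-diagonal} to locate an index with $|\sigma(i)-i|=1$, undo the corresponding insertion to obtain a shorter unknotted cycle, and re-attach a leaf to the tree provided by the inductive hypothesis. The only cosmetic difference is that you reduce to the case $\sigma(i)=i+1$ via Lemma~\ref{lemma:inverse-cycle}, whereas the paper treats both signs directly by collapsing rows and columns of the cycle diagram as in Table~\ref{tab:insertions}.
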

\begin{proof}
The proof will be by induction on the length of the cycle. A cycle of length 2 it must be $\sigma=21$ which corresponds to the tree containing only the root.

Let $D$ be the cycle diagram of an unknotted cycle $\sigma$ of length $n$. By Proposition \ref{prop:unknot-has-off-diagonal}, there is an $i$, with $1\le i\le n$ so that $|i - \sigma(i)| = 1$. If $i-\sigma(i)=1$ then, near the $i^{th}$ diagonal point, $D$ must be like the depiction in one of the rows in the third column of Table \ref{tab:insertions}.  Let $D_0$ be the cycle diagram obtained by collapsing the $i$ and $i-1$-st rows and columns of $D$, replacing them with what appears in the same row, but first column of \ref{tab:insertions}. (Thus, $D_0$ is a cycle diagram with one fewer row and column than $D$.) If $i-\sigma(i)=-1$ instead, then $D$ must be like the depiction in one of the rows of the second column of Table \ref{tab:insertions}. In similar manner to the previous case, define $D_0$ by using the same row, but in the first column.

As it has fewer diagonal nodes, we may assume that $D_0$ is obtained from a rooted-signed-binary tree $T_0$. Now, define a tree $T$ by adding a node to $T_0$: if $i-\sigma(i)=1$ then add a negative node to $T_0$ at position $i-1$; if $i-\sigma(i)=-1$ then add a positive node to $T_0$ at position $i$. Then clearly $T$ is assigned cycle diagram $D$ by the construction.
\end{proof}

\section{Further Results} \label{sec:links}
Extending these ideas, we can count all unlinked permutations (derangements).  By an unlink, we mean the knot corresponding to each cycle of the permutation is an unknot in the grid diagram, and furthermore each of these unknots are not ``linked'' with one another.  We obtain a bivariate generating function that keeps track of both the size of the permutation and the number of knots in the link.  

\begin{theorem} \label{thm:unlinkedderangements}
Let $\mathcal{U}$ be the set of unlinked derangements and denote by $cyc(\sigma)$ the number of cycles in $\sigma$ (equivalently knots in the link associated to $\sigma$).
Define the bivariate generating function \begin{equation}
    F(u,x) = 1+\sum_{\sigma \in \mathcal{U}} u^{cyc(\sigma)}x^{|\sigma|}  \label{eq:unlinkedGF}
\end{equation}
to count unlinked permutations by length and number of components.  Then $F(u,x)$ satisfies the recurrence
\begin{equation} 2 + (ux-2)F(u,x) -ux^2F(u,x)^2-uxF(u,x)\sqrt{1-6xF(u,x)+x^2F(u,x)^2})=0 \label{eq:linkrecur} \end{equation}
or, equivalently, \[ 1 + (u x-2)F(u,x) + (1 - u x - u x^2)F(u,x)^2 + (u x^2 + u^2 x^3)F(u,x)^3=0.\]
\end{theorem}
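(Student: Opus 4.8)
The plan is to identify the unlinked derangements with a rigid family of set partitions and then extract the generating function by a classical non-crossing-partition decomposition. Concretely, I would first prove the following characterization: a derangement $\sigma$ of $\{1,\dots,n\}$ is unlinked if and only if the partition $\Pi(\sigma)$ of $\{1,\dots,n\}$ into the index sets of the cycles of $\sigma$ is a \emph{non-crossing} partition, and each cycle of $\sigma$ becomes an unknotted cycle after order-preservingly relabelling its index set to $\{1,\dots,k\}$. The key geometric observation is local: if $B=\{b_1<\dots<b_k\}$ is a block of $\Pi(\sigma)$, then the portion of the cycle diagram contributed by the corresponding cycle $c_B$ is confined to the diagonal square $[b_1,b_k]^2$ and avoids each of the ``gap squares'' $[b_i,b_{i+1}]^2$; and when $\Pi(\sigma)$ is non-crossing, every other cycle's diagram lies either inside one of those gap squares of $c_B$ or inside a diagonal square disjoint from $[b_1,b_k]^2$.

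For the ``if'' direction I would induct on the number of cycles: choose a cycle whose index set has a shortest index interval, so that this interval consists of consecutive integers containing no index of any other cycle. Its diagram then occupies a clean diagonal square met by no other component, so it splits off by an embedded sphere; it is an unknot by hypothesis, and the rest of the diagram is the cycle diagram of a smaller derangement of the same type, hence an unlink by induction, so $\sigma$ is unlinked. For the ``only if'' direction, a sublink of an unlink is an unlink, so each component of $K(\sigma)$ is an unknot. If two blocks $I,J$ of $\Pi(\sigma)$ crossed {--} i.e.\ there were $a<b<c<d$ with $a,c\in I$ and $b,d\in J$ {--} then the $2$-component sublink carried by $I\cup J$ would have at least one crossing between its two components in the cycle diagram; since every crossing of a cycle diagram is negative (Section \ref{subsec:link-diagrams}), those two components would have nonzero linking number, contradicting that this sublink is the $2$-component unlink. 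Hence $\Pi(\sigma)$ is non-crossing; and then, since deleting the rows and columns outside a block is a planar isotopy of the corresponding component, each relabelled cycle is itself an unknotted cycle.

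With the characterization in hand, the enumeration is routine. By Theorem \ref{thm:main}, the generating function of a single relabelled unknotted cycle, with $u$ marking that one cycle and $x$ its length, is $u\sum_{k\ge 2}S_{k-1}x^{k}=u\,x\,S(x)$, where $S(x)=\frac{1}{2}\bigl(1-x-\sqrt{1-6x+x^{2}}\bigr)$. Decomposing a non-crossing partition according to the block $\{1=p_1<\dots<p_m\}$ containing $1$, whose $m-1$ interior gaps and one tail each carry an independent structure of the same kind, gives
\[
F(u,x)=1+\sum_{m\ge 2}u\,S_{m-1}\,x^{m}F(u,x)^{m}=1+u\bigl(xF(u,x)\bigr)S\bigl(xF(u,x)\bigr).
\]
Substituting the closed form of $S$ and clearing the factor $\frac{1}{2}$ gives $2+(ux-2)F-ux^{2}F^{2}-uxF\sqrt{1-6xF+x^{2}F^{2}}=0$, which is \eqref{eq:linkrecur}; isolating the radical, squaring, and dividing by $-4$ yields the equivalent cubic.

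The main obstacle is the structural characterization, and within it the claim that two crossing index sets force a nonzero number of crossings between the corresponding components of the cycle diagram. Everything downstream of the characterization is standard: the non-crossing-partition functional equation is classical, and the passage from \eqref{eq:linkrecur} to the polynomial form is an elementary (if slightly tedious) manipulation.
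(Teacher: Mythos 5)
Your proposal is correct and follows essentially the same route as the paper: the paper's Lemma \ref{lem:noncrossing} is your linking-number argument that components are unknotted with pairwise non-crossing diagrams, its Lemma \ref{lem:linkdecomp} is your non-crossing decomposition relative to the cycle containing $1$, and the enumeration via $F=1+\sum_{k\ge 2}S_{k-1}ux^kF(u,x)^k=1+uxF\,S(xF)$ is identical. The only addition is that you explicitly verify the converse (non-crossing plus unknotted components implies unlinked), which the paper leaves implicit in its constructive count.
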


Setting $u{=}1$ we recover the generating function $f(x)$ for the sequence of all unlinked permutations, and find that $f(x)$ satisfies the recurrence \[ 1 + (-2+ x)f(x) + (1 - x - x^2)f(x)^2 + (x^2 + x^3)f(x)^3=0\] corresponding to the sequence 1, 2, 8, 32, 143, 674, 3316, 16832, 87538$\ldots$.  This, as well as the sequences counting unlinked permutations with $k$ disjoint cycles for each $k<5$ are listed in Table \ref{tab:unlinked}.  Note that the number of unlinked permutations of $2n$ with $n$ components is the $n$-th Catalan number, which is easy to prove by relating the nested links to Dyck paths.

    \begin{table}[h]    
    \centering
\begin{tabular}{|c|c|c|}
\hline
$k$ &$\#$ $k$-component unlinked permutations& Generating Function \\
\hline
1     & $0,1,2,6,22,90,394,1806,8558,41586\ldots$ & $\frac{1}{2} \left(x-x^2-x\sqrt{x^2-6 x+1} \right)$\\
2     & $0,0,0,2,10,48,238,1216,6354,33760\ldots$ & $\frac{1}{2}x^2{-}3x^3{+}x^4{-}\frac{x^2-9 x^3 +12 x^4-2 x^5}{2\sqrt{x^2-6 x+1}}$ \\
3     & $0,0,0,0,\ \hspace{0.6mm}0,\ \hspace{0.6mm}5,\ \hspace{0.6mm}42,\ \hspace{0.5mm}280,1752,10710\ldots$ & $\ldots$ \\
4     & $0,0,0,0,\ \hspace{0.6mm}0,\ \hspace{0.6mm}0,\ \hspace{1.1mm}\ 0,\ \hspace{1mm}\ 14,\ \hspace{0.8mm}168,\ \hspace{0.5mm}1440\ldots$ & $\ldots$ \\
\hline
all & $0,1, 2, 8, 32, 143, 674, 3316, 16832,87538\ldots$ & $f(x)$ \\
\hline
\end{tabular}
    \caption{The number of unlinked permutations by the number of components in the link. \label{tab:unlinked}} \vspace{-1.5em}
\end{table}
To prove Theorem \ref{thm:unlinkedderangements} we need the following result.
\begin{lemma} \label{lem:noncrossing}
Suppose $\sigma$ is an unlinked permutation, with cycle decomposition $\sigma=\sigma_1\sigma_2\cdots\sigma_k$.  Then each of the component cycles $\sigma_i$ are unknotted cycles, and the cycle diagrams of any two cycles are noncrossing.
\end{lemma}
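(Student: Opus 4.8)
The plan is to treat the two assertions separately; both come down to unwinding definitions together with the observation (from Section~\ref{subsec:link-diagrams}) that every crossing in a cycle diagram is negative. Write $I_j\subseteq\{1,\dots,n\}$ for the support of the cycle $\sigma_j$, and let $D_j$ be the portion of the cycle diagram $D$ of $\sigma$ made up of the segments attached to indices $i\in I_j$. Since $\sigma(I_j)=I_j$, these segments only ever meet the diagonal at points $(i,i)$ with $i\in I_j$, and $D$ is the union of $D_1,\dots,D_k$, no two of which share a segment. Tracing the diagram along the cycle structure shows $K(D)$ is the union of the $k$ knots $K_j:=K(D_j)$, one per cycle.

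For the first assertion: by the definition of an unlinked permutation, each $K_j$ is an unknot. Deleting from $D_j$ the rows and columns whose index lies outside $I_j$ and closing up the resulting gaps is a planar isotopy of the diagram (so the knot type is unchanged), and it produces exactly the cycle diagram of the order-isomorphic cycle of length $|I_j|$. Hence that cycle diagram is an unknot as well, i.e.\ $\sigma_j$ is an unknotted cycle.

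For the second assertion: fix $a\ne b$ and suppose a segment of $D_a$ crosses a segment of $D_b$. This intersection is a crossing of the cycle diagram $D$, hence negative (vertical overstrand, horizontal understrand, orientations as in the convention). Thus every crossing between $D_a$ and $D_b$ has the same sign, and the linking number $\operatorname{lk}(K_a,K_b)$ equals $-\tfrac12$ times the number of such crossings. As $D_a$ and $D_b$ are closed curves in the plane, they meet transversally in an even number of points, so this count is a positive even integer whenever $D_a$ and $D_b$ cross at all; then $\operatorname{lk}(K_a,K_b)\ne 0$, so $K_a$ and $K_b$ are linked, contradicting that $\sigma$ is unlinked. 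Therefore the diagrams of distinct cycles are noncrossing.

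The step needing the most care is the last one: one must check that a crossing between two cycle diagrams really is a crossing of $D$ (so the ``all crossings negative'' fact applies), and that the number of such crossings is simultaneously even (Jordan curve theorem) and, up to sign and a factor of two, a linking number. No further topology — Bennequin's inequality, Seifert circles, genus estimates — is needed here; those tools have already done their work in characterizing unknotted cycles, and this lemma only repackages that characterization for products of cycles.
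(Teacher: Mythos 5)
Your proposal is correct and takes essentially the same approach as the paper: the noncrossing claim is proved via the invariance of the linking number (zero for an unlink) combined with the fact that every crossing in a cycle diagram is negative, so any crossing between two component diagrams would force a nonzero linking number. Your explicit treatment of the first assertion (deleting unused rows and columns is a planar isotopy onto the reduced cycle's diagram) is a small addition the paper leaves implicit, but it is not a different argument.
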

\begin{proof}
We show that the cycle diagrams of two different component cycles of an unlinked permutation cannot cross each other by considering the linking number of the corresponding unknots.  Let $K$ and $K'$ be two different components of a link $L$, and consider a diagram in the plane of $L$. Define $c_+$ (resp.\ $c_-$) to be the number of positive (resp.\ negative) crossings in the diagram, where we only count crossings that involve one strand from $K$ and one strand from $K'$. The \emph{linking number} of $K$ and $K'$ equals $\nicefrac{1}{2}(c_+ - c_-)$.

For any planar diagram that represents a link equivalent to $L$ there will be two components corresponding to $K$ and $K'$. It is well-known (see e.g. \cite[pg. 11]{Lickorish}) that the linking number (in that diagram) of those two components must be equal to the previously computed number, $\nicefrac{1}{2}(c_+ - c_-)$. That is, the linking number of $K$ and $K'$ is invariant under change of diagram. (In fact, one can verify this by checking that the linking number will be unchanged by Reidemeister moves.)

Since there is a diagram of the $cyc(\sigma)$ component unlink which has no crossings at all, the linking number of any two components in any diagram must be zero. However, all crossings are negative in cycle diagrams. If two component cycle diagrams cross at any point, then the linking number of those components cannot be zero, and so the diagram cannot be that of an unlinked permutation.  
\end{proof}

Define the support of a length $n$ permutation $\sigma$, $\sup(\sigma)=\{1\leq i \leq n \mid \sigma(i) \neq i \}$.  
\begin{lemma} \label{lem:linkdecomp}
Suppose $\sigma$ is a derangement of $n$ whose cycle diagram contains no crossings between different components of its cycle diagram.  Let $\sigma'$ denote the cycle of $\sigma$ containing 1 and let $1=\sigma'_1 < \sigma'_2< \cdots <\sigma'_j$ be the elements of $\sup(\sigma')$.  Then there exist permutations $\tau_1$, $\tau_2$, \ldots, $\tau_j$ with \begin{equation} \sigma = \sigma'\circ \tau_1\circ \tau_2\circ \cdots\circ \tau_j \label{eq:sigmadecomp} \end{equation} such that the support of  $\tau_i$ is precisely the integers between $\sigma'_i$ and $\sigma'_{i+1}$,
\begin{equation}
    \sup(\tau_i)=\{k|\sigma'_i<k<\sigma'_{i+1}\},\text{ for } i<j \text{ and } \sup(\tau_j)=\{k|\sigma'_j<k \leq n\}.
\end{equation}
\end{lemma}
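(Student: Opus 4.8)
The plan is to reduce the statement to the single assertion that $\sigma$ maps each of the sets $I_i:=\{k:\sigma'_i<k<\sigma'_{i+1}\}$ (with the convention $\sigma'_{j+1}:=n+1$, so $I_j=\{k:\sigma'_j<k\le n\}$) to itself. Indeed, $\sup(\sigma')=\{\sigma'_1,\dots,\sigma'_j\}$ and $I_1,\dots,I_j$ partition $\{1,\dots,n\}$. Granting that $\sigma(I_i)\subseteq I_i$ for all $i$ — equivalently $\sigma(I_i)=I_i$, as $\sigma$ is injective and $I_i$ finite — I would define $\tau_i$ to be the permutation agreeing with $\sigma$ on $I_i$ and fixing every other point. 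Then $\sup(\tau_i)\subseteq I_i$, and since $\sigma$ is a derangement it has no fixed point in $I_i$, so in fact $\sup(\tau_i)=I_i$. The permutations $\sigma',\tau_1,\dots,\tau_j$ have pairwise disjoint supports, hence commute, and one checks pointwise that $\sigma'\circ\tau_1\circ\cdots\circ\tau_j=\sigma$: on a point of $\sup(\sigma')$ only $\sigma'$ acts nontrivially and it agrees with $\sigma$ there because $\sigma'$ is by definition the cycle of $\sigma$ through $1$; on a point of $I_i$ only $\tau_i$ acts nontrivially and agrees with $\sigma$ by construction. This is exactly \eqref{eq:sigmadecomp}, so everything comes down to proving $\sigma(I_i)\subseteq I_i$.

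\textbf{The key claim.} Note that $\sigma(I_i)\subseteq I_i$ for all $i$ is equivalent to: no cycle of $\sigma$ other than $\sigma'$ has support meeting two of the $I_i$; equivalently, there is no $k\notin\sup(\sigma')$ with some $p\in\sup(\sigma')$ lying strictly between $k$ and $\sigma(k)$. Suppose such $k,p$ exist, so the cycle $C$ of $\sigma$ containing $k$ is distinct from $\sigma'$. I would work with the combinatorial description of crossings: by the remark following Definition~\ref{def:Cpair-URindex}, every crossing of a cycle diagram is a $C$-pair, i.e.\ arises from two L-segments lying on the \emph{same} side of the diagonal whose ``diagonal spans'' $\bigl[\min\{i,\sigma(i)\},\max\{i,\sigma(i)\}\bigr]$ interleave (overlap with neither containing the other). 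So the hypothesis says precisely that no two such interleaving spans come from two different cycles of $\sigma$. Also recall that replacing $\sigma$ by $\sigma^{-1}$ reflects the cycle diagram across the diagonal (swapping the two sides), which leaves unchanged the hypothesis, the set $\sup(\sigma')$, the intervals $I_i$, and the truth of the claim; it also preserves the collection of L-segment spans and their lengths.

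\textbf{A minimal span.} Among all L-segments belonging to cycles of $\sigma$ other than $\sigma'$ whose span contains a point of $\sup(\sigma')$ in its interior — a nonempty set, by the previous paragraph — I would pick one whose span $[a,a']$ is shortest, and, after a reflection if necessary, assume it lies above the diagonal, so that $\sigma(a)=a'$ with $a<a'$, that $a\in\sup(C)$ for some cycle $C\neq\sigma'$, and that some $p\in\sup(\sigma')$ satisfies $a<p<a'$. Since $\sigma'$ is a single cycle containing both $p$ and $1$, and $1<a$ (because $1\in\sup(\sigma')$ while $a\notin\sup(\sigma')$, so $a\ge 2$), the closed curve traced by $\sigma'$ must pass from a diagonal point inside the strip $a<x<a'$ to one outside it, hence contains an L-segment that leaves (or enters) this strip. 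A short case analysis on the side of the diagonal and the direction of that segment yields a span coming from $\sigma'$ that interleaves with $[a,a']$ — giving the forbidden crossing — except in the configuration where $\sigma'$ meets the boundary of the strip only through below-diagonal segments; in that case one compares instead with a below-diagonal L-segment of $C$ lying over $p$ (whose span, by minimality of $[a,a']$, cannot be strictly shorter and hence contains $[a,a']$, reducing to the $2$-cycle/Jordan-curve situation or to the same argument applied to this longer span) and again produces two interleaving spans from the distinct cycles $C$ and $\sigma'$. Either way the no-crossing hypothesis is contradicted, so $\sigma(I_i)\subseteq I_i$ for every $i$, completing the proof.

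\textbf{Main obstacle.} I expect the genuinely delicate step to be the case analysis in the last paragraph: cataloguing how the curve of $\sigma'$ first crosses the boundary of the vertical strip over $(a,a')$, in particular ruling out the configurations in which it escapes only through segments on the ``wrong'' side of the diagonal, and making sure that in every case minimality of $[a,a']$ forces an interleaving pair of same-side spans from the two cycles. The reduction, the translation of ``no crossing'' into ``no interleaving spans,'' and the verification that disjoint-support permutations multiply to $\sigma$ are all routine.
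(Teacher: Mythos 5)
Your reduction is sound and is essentially what the paper does: the paper defines $\tau_i$ as the product of all cycles of $\sigma$ meeting the open interval $(\sigma'_i,\sigma'_{i+1})$ and then shows their supports stay inside those intervals, which is the same as your claim that $\sigma(I_i)\subseteq I_i$. The whole content of the lemma is therefore the straddling claim, and that is exactly where your argument has a genuine gap. Your plan is to pick a minimal above-diagonal span $[a,a']$ from a cycle $C\neq\sigma'$ containing some $p\in\sup(\sigma')$, and to extract from $\sigma'$ an L-segment whose span interleaves with $[a,a']$ \emph{on the same side of the diagonal}. Finding an interleaving span of $\sigma'$ is easy (some step of $\sigma'$ must cross the boundary of the strip $a<x<a'$ to reach $p$), but a $C$-pair requires both segments on the same side of the diagonal, and nothing forces $\sigma'$ to enter the strip from above. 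You acknowledge this exceptional configuration yourself, and the fallback you sketch (``compare instead with a below-diagonal L-segment of $C$ lying over $p$ \ldots reducing to the same argument applied to this longer span'') is not a proof: it is not clear which below-diagonal segment of $C$ you mean, why its span must contain $p$ or $[a,a']$, or why the recursion on ``longer spans'' terminates in a same-side interleaving pair rather than cycling. As written, the crux of the lemma is asserted, not proved.

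The paper avoids the case analysis entirely with a Jordan-curve separation argument, which you may want to adopt. If some $k$ in a cycle $C\neq\sigma'$ satisfies $k<p<\sigma(k)$ for $p\in\sup(\sigma')$, then following $C$ around one also finds $k'\in\sup(C)$ with $\sigma(k')<p<k'$; the first gives an above-diagonal L-segment of $C$ passing up-and-left of $(p,p)$ and the second a below-diagonal L-segment passing down-and-right of it, so the closed curve traced by $C$ encloses $(p,p)$ in its bounded region. Since every element of $\sup(C)$ exceeds $1$, the point $(1,1)$ lies in the unbounded region, yet the closed curve of $\sigma'$ passes through both $(1,1)$ and $(p,p)$. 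Hence the two closed curves intersect, and since distinct components of a cycle diagram meet only at crossings, this contradicts the no-crossing hypothesis in one stroke, with no minimality argument and no side-of-diagonal bookkeeping. Your combinatorial route could probably be pushed through, but you would need to actually carry out and verify the case analysis you have only gestured at.
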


\begin{wrapfigure}{r}{0.41\textwidth}
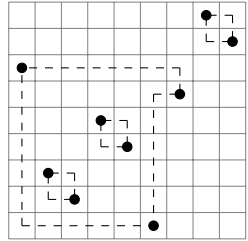

    \centering
    \cyclefig{7,3,2,5,4,1,6,9,8}
    \caption{$\sigma = 732541698$}
    \label{fig:732541698}
\end{wrapfigure}

\
\vspace{-7mm}

\begin{example}
Take $\sigma=732541698$ (one line notation) as depicted in Figure \ref{fig:732541698}.  Written in cycle notation $\sigma=(176)(23)(45)(89)$.  Then (in cycle notation) $\sigma'=(176)$, ${\sigma'_1=1}, {\sigma'_2=6}, \sigma'_3=7$ and $\tau_1=(23)(45)$, $\tau_2$ is the identity, and $\tau_3=(89)$.
\end{example}

\begin{proof}[Proof of Lemma \ref{lem:linkdecomp}]
For each $1\leq i\leq j$, define $\tau_i$ to be the permutation consisting of all cycles from $\sigma$ that contain in their support at least one element strictly between $\sigma_i'$ and $\sigma_{i+1}'$ (or strictly greater than $\sigma_j'$, in the case of $\tau_j$).  It suffices to show that every element in $\sup(\tau_i)$ is contained in the desired range, between $\sigma'_i$ and $\sigma'_{i+1}$ (or greater than $\sigma'_j$, when $i=j$).  

To this end, suppose to the contrary there were a $k < \sigma'_{i+1}$ with $\tau_i(k)>\sigma'_{i+1}$.  (The case where $\tau_i(k)<\sigma'_{i}$ proceeds similarly.)  While $\tau_i$ need not consist of a single cycle, there must exist some $k'\in\sup(\tau_i)$ so that $k'> \sigma'_{i+1}$, and yet $1 <\tau_i(k')<\sigma'_{i+1}$ (the first iteration of $\tau_i$, $n\geq 1$, so that $k'=\tau_i^n(k) > \sigma'_{i+1}$ but $\tau_i^{n+1}(k) < \sigma'_{i+1}$).  

Recall that if $k < \sigma(k)$, the corresponding vertical and horizontal segments of the cycle diagram for $\sigma$ must be \emph{above} the diagonal, while if $k > \sigma(k)$, the corresponding segments must be \emph{below} the diagonal. Thus, the elements $k, k'$ determine a cycle component whose corresponding diagram surrounds the diagonal point $(\sigma'_{i+1},\sigma'_{i+1})$. That component's diagram is a closed curve in the plane, as depicted in Figure \ref{fig:lem_cycle_supports}.  Since elements in the support of $\tau_i$ must all be greater than $1$, the point $(1,1)$ lies in the unbounded region for this closed curve, and the diagram for $\sigma'$ must reach the bounded region to arrive at $(\sigma'_{i+1}, \sigma'_{i+1})$. There must then be a crossing between these components, contradicting the assumption there is no such crossing. \qedhere
\end{proof}

\begin{wrapfigure}{l}{0.38\textwidth} \centering 
    \begin{tikzpicture}[scale=0.45]
        \draw[help lines] (-0.5,-0.5) grid (7.5, 7.5);
        \draw[thin, dashed]   (0.5,0.5) -- (0.5,5.5) -- (5.5,5.5) -- (5.5,3.5) -- (3.5,3.5);
        \draw[very thick,dashed, blue!50!black]   (2.5,2.5) -- (2.5,4.5) -- (4.5,4.5);
        \draw[very thick, dashed, blue!50!black]  (6.5,6.5) -- (6.5,1.5) -- (1.5,1.5);
        \draw[thin, dashed, blue!50!white]    (4.5,4.5) ..controls (3.9,5.1) and (3.9,5.3) ..(4.5,5.5)
        ..controls (5.1,5.7) and (5.2,5.8) ..(5.2,6.2)
        ..controls (5.2,6.6) and (6.3,7.1) .. (6.5,6.5);
        \draw[thin, dashed, blue!50!white]    (2.5,2.5) ..controls (1.7,2.5) and (1.5,2.3) .. (1.5,1.5);
        \draw[fill=white]   (3.5,3.5) circle (.18);
        \draw[fill=gray]    (0.5,5.5) circle (.18)
                            (5.5,3.5) circle (.18);
        \foreach \p in {(6.5,1.5), (2.5,4.5)}
        \draw[fill=blue!50!black]   \p circle (.18);
        \draw (4.2,2.9) node {\footnotesize $\sigma'_{i+1}$};
        \draw   (2.7,2.3) node {\footnotesize $k$}
                (6.9,6.5) node {\footnotesize $k'$};
    \end{tikzpicture}
    \caption{The cycle diagram lines for $\tau_i$.}
    \label{fig:lem_cycle_supports}
\end{wrapfigure}

Finally, we count unlinked permutations.

\begin{proof}[Proof of Theorem \ref{thm:unlinkedderangements}]

In the definition of the bivariate generating function $F(u,x)$ the initial term 1 accounts for an empty derangement, which will be useful in the recursion.  For the remainder of the proof we assume we are counting nontrivial derangements, which necessarily have length at least 2.

We now construct all such unlinked derangements as follows. We select first the cycle containing the element 1.  Once we fix the number of number of elements to be in this cycle, $k$, then by Theorem \ref{thm:main} the number of possible choices for a cycle supported on $k$ elements is $S_{k-1}$. (Note that the knot type is unchanged when the numbers in the support of the cycle are changed, so long as their relative order is preserved, as shifting the values, without changing their order has the effect only of changing the lengths of the lines in the corresponding cycle diagram.)

Having chosen the number of elements permuted by the cycle containing 1, $\sigma'$ as well as the relative cycle type on those elements, we can apply Lemma \ref{lem:linkdecomp}, which tells us that in between each of the elements of $\sigma'$ we can insert any unlink (including potentially the empty unlink) which is counted by our original generating function $F(u,x)$.  Since we can insert such an unlink between any of the $k$ elements of $\sigma'$, as well as after the last element, there are a total of $k$ places where such an unlink can be inserted.  Adding together these possibilities, we have
\begin{align*}
    F(u,x) = 1 + \sum_{k=2}^\infty S_{k-1}ux^kF(u,x)^k &= 1+ uxF(u,x) \sum_{k=1}^\infty S_{k}\big(xF(u,x)\big)^{k} \nonumber \\
    &= 1+ ux F(u,x) S\big(xF(u,x)\big).
\end{align*}

Using that $S(x)=\frac{1}{2}\left(1-x-\sqrt{1-6x+x^2}\right)$ now gives 

\begin{align*}
    F(u,x)    &= 1+ \frac{ux}{2}F(u,x) \big(1-xF(u,x)-\sqrt{1-6xF(u,x)+(xF(u,x))^2})\big)
\end{align*}
which simplifies to \eqref{eq:linkrecur}. \end{proof}

Finally, one could instead choose to consider all permutations, rather than just derangements, with the convention that any fixed points in the permutation correspond to infinitesmal unknot components of the unlink.  In this case, we obtain the following modification of Theorem \ref{thm:unlinkedderangements} by the same argument. 
\begin{theorem}
Let $\mathcal{V}$ be the set permutations whose cycle diagram corresponds to an unlink (treating fixed points as their own component of an unknot) and define the bivariate generating function \begin{equation}
    G(u,x) = 1+\sum_{\sigma \in \mathcal{V}} u^{cyc(\sigma)}x^{|\sigma|}.  \label{eq:unlinkedpermGF}
\end{equation} Then $G(u,x)$ satisfies the recurrence \begin{equation} 2 + (3ux-2)G(u,x) -ux^2G(u,x)^2-uxG(u,x)\sqrt{1-6xG(u,x)+x^2G(u,x)^2}=0, \label{eq:linkpermrecur} \end{equation} or equivalently \[ux^2G(u,x)^3 + (2u^2x^2 - ux^2 - 3ux  + 1)G(u,x)^2 + (3ux - 2)G(u,x) + 1=0\] 
\end{theorem}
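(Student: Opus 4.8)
The plan is to rerun the proof of Theorem~\ref{thm:unlinkedderangements} essentially verbatim, the one new ingredient being that the block containing the element $1$ is now allowed to be a fixed point. First I would check that Lemmas~\ref{lem:noncrossing} and~\ref{lem:linkdecomp} extend to the present setting. A fixed point contributes no segment at all to the cycle diagram, so it is vacuously noncrossing with every other component; consequently the linking-number argument of Lemma~\ref{lem:noncrossing} still shows that each \emph{nontrivial} cycle of a permutation $\sigma\in\mathcal V$ is an unknotted cycle whose diagram is noncrossing with all other component diagrams. For Lemma~\ref{lem:linkdecomp}, if $1$ is a fixed point there is nothing to prove --- the permutation of $\{2,\dots,n\}$ simply sits ``after'' the point $1$ --- and if $1$ lies in a nontrivial cycle the argument is unchanged, since a fixed point cannot be ``nested inside'' any component (again because it draws no segments).

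With this in hand, I would set up the recursion exactly as before by conditioning on the cycle $\sigma'$ that contains $1$. If $\sigma'$ is a fixed point it contributes the factor $ux$ and there is a single slot --- ``after'' the point $1$ --- into which an arbitrary (possibly empty) member of $\mathcal V$, counted by $G(u,x)$, may be inserted; this case contributes $uxG(u,x)$. If instead $\sigma'$ is a cycle on $k\ge 2$ elements, then by Theorem~\ref{thm:main} there are $S_{k-1}$ choices for its relative cycle type, the factor $ux^{k}$ records this component, and Lemma~\ref{lem:linkdecomp} supplies $k$ insertion slots, each to be filled by a member of $\mathcal V$; this contributes $S_{k-1}ux^{k}G(u,x)^{k}$. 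Summing and using $\sum_{k\ge 2}S_{k-1}(xG)^{k-1}=S(xG)$,
\begin{align*}
G(u,x)&=1+uxG(u,x)+\sum_{k=2}^{\infty}S_{k-1}\,ux^{k}G(u,x)^{k}\\
&=1+uxG(u,x)\left(1+S\bigl(xG(u,x)\bigr)\right).
\end{align*}

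I would then substitute the closed form $S(y)=\frac{1}{2}\bigl(1-y-\sqrt{1-6y+y^{2}}\bigr)$ with $y=xG(u,x)$ into this functional equation; multiplying through by $2$ and collecting terms gives precisely \eqref{eq:linkpermrecur}. For the polynomial form, writing $G$ for $G(u,x)$, I would isolate the radical as $uxG\sqrt{1-6xG+x^{2}G^{2}}=2+(3ux-2)G-ux^{2}G^{2}$, square both sides, and expand $u^{2}x^{2}G^{2}(1-6xG+x^{2}G^{2})$ against $\bigl(2+(3ux-2)G-ux^{2}G^{2}\bigr)^{2}$; the $G^{4}$ terms cancel, and dividing the remainder by $4$ yields $ux^{2}G^{3}+(2u^{2}x^{2}-ux^{2}-3ux+1)G^{2}+(3ux-2)G+1=0$, as stated. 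Squaring introduces no extraneous branch, since $G(u,x)$ is already determined as the unique formal power series solution of the functional equation with $G(u,0)=1$.

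The computation is routine once Theorem~\ref{thm:unlinkedderangements} is available, so I expect the only step needing genuine care to be the extension of Lemmas~\ref{lem:noncrossing} and~\ref{lem:linkdecomp} to permutations with fixed points --- which, as noted, is immediate from the fact that a fixed point contributes nothing to the cycle diagram. As a sanity check one can verify the low-order terms of $G$: for instance the coefficient of $x^{3}$ should be $u^{3}+3u^{2}+2u$, accounting for the identity, the three permutations consisting of one transposition and one fixed point, and the two unknotted $3$-cycles.
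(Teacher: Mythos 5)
Your proposal is correct and is exactly the argument the paper intends: the paper proves this theorem by simply invoking ``the same argument'' as Theorem~\ref{thm:unlinkedderangements}, and your write-up carries that out, correctly adding the one new case (the cycle containing $1$ may be a fixed point, contributing the extra term $uxG(u,x)$ that turns the coefficient $ux-2$ into $3ux-2$). The algebraic reduction to \eqref{eq:linkpermrecur} and the cubic, and the $x^3$ sanity check, all check out.
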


Setting $u=1$ the sequence counting such permutations with any number of components begins \[1, 2, 6, 23, 103, 511, 2719, 15205, 88197,\ldots\]
which appears to match entry \href{http://oeis.org/A301897/}{A301897} in the OEIS.  This sequence counts permutations with the following property.  Given a permutation $\sigma$, let $\mathrm{inv}(\sigma)$ be the number of inversions, $\mathrm{cyc}(\sigma)$ the number of cycles, and $\mathrm{td}(\sigma)$ the total displacement, defined by Diaconis and Graham \cite{diaconis} to be $$\mathrm{td}(\sigma)=\sum_{i=1}^{|\sigma|} |\sigma(i)-i|$$  (see also \cite{peterson}).  Diaconis and Graham prove that $\mathrm{inv}(\sigma)+(|\sigma|-\mathrm{cyc}(\sigma))\leq \mathrm{td}(\sigma)$. 

The OEIS sequence above counts those permutations for which the Diaconis-Graham inequality is an equality.  Jacob Alderink, Samuel Johnson, Noah Jones, Matthew Mills, and Alexander Woo conjecture that this set of permutations is, in fact, precisely the set of permutations giving unlinks.

\bibliographystyle{amsplain}
\bibliography{unknotbib}

\end{document}